\numberwithin{equation}{section}
\newtheorem{proposition}{Proposition}[section]
\newtheorem{lemma}[proposition]{Lemma}
\newtheorem{theorem}[proposition]{Theorem}
\theoremstyle{definition}
\newtheorem{remark}[proposition]{Remark}
\newtheoremstyle{qqq}
  {}   
  {}   
  {\slshape}  
  {0pt}       
  {\bfseries} 
  {.}         
  {5pt plus 1pt minus 1pt} 
  {}          
\theoremstyle{qqq}
\newtheorem{question}[proposition]{Open problem}
\newcommand*{\da@rightarrow}{\mathchar"0\hexnumber@\symAMSa 4B }
\newcommand*{\da@leftarrow}{\mathchar"0\hexnumber@\symAMSa 4C }
\newcommand*{\xdashrightarrow}[2][]{%
  \mathrel{%
    \mathpalette{\da@xarrow{#1}{#2}{}\da@rightarrow{\,}{}}{}%
  }%
}
\newcommand{\xdashleftarrow}[2][]{%
  \mathrel{%
    \mathpalette{\da@xarrow{#1}{#2}\da@leftarrow{}{}{\,}}{}%
  }%
}
\newcommand*{\da@xarrow}[7]{%
  \sbox0{$\ifx#7\scriptstyle\scriptscriptstyle\else\scriptstyle\fi#5#1#6\m@th$}%
  \sbox2{$\ifx#7\scriptstyle\scriptscriptstyle\else\scriptstyle\fi#5#2#6\m@th$}%
  \sbox4{$#7\dabar@\m@th$}%
  \dimen@=\wd0 %
  \ifdim\wd2 >\dimen@
    \dimen@=\wd2 %
  \fi
  \count@=2 %
  \def\da@bars{\dabar@\dabar@}%
  \@whiledim\count@\wd4<\dimen@\do{%
    \advance\count@\@ne
    \expandafter\def\expandafter\da@bars\expandafter{%
      \da@bars
      \dabar@ 
    }%
  }%
  \mathrel{#3}%
  \mathrel{%
    \mathop{\da@bars}\limits
    \ifx\\#1\\%
    \else
      _{\copy0}%
    \fi
    \ifx\\#2\\%
    \else
      ^{\copy2}%
    \fi
  }%
  \mathrel{#4}%
}
\begin{document}
\title[Symmetry in perturbed GUE corners]{Parameter symmetry in perturbed GUE corners
	process and reflected 
	drifted Brownian motions}

\author[L. Petrov]{Leonid Petrov}
\address{L. Petrov, University of Virginia, Department of Mathematics,
141 Cabell Drive, Kerchof Hall,
P.O. Box 400137,
Charlottesville, VA 22904, USA,
and Institute for Information Transmission Problems,
Bolshoy Karetny per. 19, Moscow, 127994, Russia}
\email{lenia.petrov@gmail.com}

\author{Mikhail Tikhonov}
\address{M. Tikhonov, 
Lomonosov Moscow State University, Faculty of Physics,
Leninskie Gory, 1-2,
119991, Moscow, Russia,
and Institute for Information Transmission Problems,
Bolshoy Karetny per. 19, Moscow, 127994, Russia}
\email{me@mtikhonov.com}

\begin{abstract}
	The perturbed GUE corners ensemble
	is the joint distribution of 
	eigenvalues of all principal submatrices of 
	a matrix $G+\mathrm{diag}(\mathbf{a})$,
	where $G$ is the random matrix from the Gaussian Unitary Ensemble
	(GUE), and $\mathrm{diag}(\mathbf{a})$ is a fixed diagonal
	matrix.
	We introduce Markov transitions based on exponential jumps of 
	eigenvalues, and show that their successive application
	is equivalent in distribution to a deterministic shift of the 
	matrix. This result also leads to a new distributional symmetry
	for a family of reflected Brownian motions with 
	drifts coming from an arithmetic progression.
	
	The construction we present may be viewed as a 
	random matrix analogue of the 
	recent results of the first author and Axel Saenz \cite{PetrovSaenz2019backTASEP}.
\end{abstract}

\date{}

\maketitle

\setcounter{tocdepth}{3}

\section{Introduction}

\subsection{Couplings for perturbed GUE corners process}

The Gaussian Unitary Ensemble (GUE) is the most 
well-known random matrix model
\cite{mehta2004random}, 
\cite{Forrester-LogGas}, \cite{AndersonGuionnetZeitouniBook}.
This paper presents a new symmetry
of the distribution of the \emph{perturbed GUE
ensemble}. By this we mean the random matrix ensemble
of the form $H=G+\mathrm{diag}(a_1,\ldots,a_N)$, where $G$ is an $N\times N$
GUE random matrix, to which we add a fixed diagonal matrix. 
This model is often also called \emph{GUE with external source}.
We refer to 
\cite{desrosiers2006asymptotic},
\cite{adler2013random},
\cite{Ferrari2014PerturbedGUE}
and references therein for the history 
of the perturbed ensemble and various asymptotic results.
(In fact, below we consider a slightly more general version of the matrix
model involving a time-dependent rescaling; this version is suitable for 
the application to reflected Brownian motions.)

\medskip

The unperturbed GUE random matrix, corresponding to $a_i\equiv 0$,
is unitary invariant
in the sense that there is 
equality in distribution
$G\stackrel{d}{=}UGU^*$ for 
any fixed $N\times N$ unitary matrix $U$.
This implies that the distribution of the eigenvalues of 
$H$ is \emph{symmetric} in the perturbation parameters $a_1,\ldots,a_N $.
The overall goal of the paper is to \textbf{explore probabilistic
consequences of this symmetry property}.

\medskip

Together with the eigenvalues $\lambda^N=(\lambda^N_N\le \ldots\le\lambda^N_1 )$,
$\lambda^N_i\in \mathbb{R}$,
of the full matrix $H=[h_{ij}]_{i,j=1}^{N}$, 
one can also
consider its
\emph{corners process},\footnote{Also called
\emph{minors process} in the literature, 
cf. \cite{johansson2006eigenvalues}.}
that is, the interlacing collection of eigenvalues
of the principal corners
$[h_{ij}]_{i,j=1}^{k}$ of $H$ for all $k=1,2,\ldots,N $.
(See \Cref{fig:interlace} for an illustration.)
The distribution of the corners process of $H$ is 
\emph{not symmetric} in the parameters $a_i$.
Moreover, assuming that the $a_i$'s are 
all distinct, there are $N!$ \emph{different probability
distributions} on 
interlacing collections of eigenvalues
at $N$ levels. 

\begin{figure}[htpb]
	\centering
	\includegraphics[width=.55\textwidth]{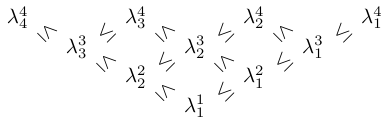}
	\caption{Interlacing array of eigenvalues
	of all principal corners of a $4\times 4$ matrix.}
	\label{fig:interlace}
\end{figure}

\medskip

In this paper we present explicit couplings 
between these $N!$ distributions, 
by showing that each nearest neighbour transposition
$a_k\leftrightarrow a_{k+1}$, $k=1,\ldots,N-1 $, of the parameters
is equivalent in distribution to a 
rather simple \emph{Markov swap operator}
$\mathscr{S}^{a_k-a_{k+1}}_k$. 
This swap operator
randomly changes the entries $\lambda^k_i$
on the $k$-th level of the array given the 
two adjacent levels $\lambda^{k-1},\lambda^{k+1}$,
while leaving all other entries intact.
If $a_k>a_{k+1}$, 
$\mathscr{S}^{a_k-a_{k+1}}_k$
is realized as an independent collection of instantaneous
exponential type jumps of each $\lambda^k_i$
to the left:\footnote{Here and below we use the standard notation
$A\vee B=\max(A,B)$, $A\wedge B=\min (A,B)$ for $A,B\in \mathbb{R}$.}
\begin{equation*}
    	\lambda^{k}_i\mapsto \nu^k_i:=
	\lambda^{k+1}_{i+1}\vee \lambda^{k-1}_{i}
	+
	\mathscr{E}_{a_k-a_{k+1}}^i\wedge
	\bigl(
		\lambda^k_i-
		\lambda^{k+1}_{i+1}\vee \lambda^{k-1}_{i}
	\bigr),
	\qquad i=1,\ldots,k ,
\end{equation*}
where $\mathscr{E}_{a_k-a_{k+1}}^{i}$'s are independent exponential random variables
with parameter $a_k-a_{k+1}$ (and mean $1 / (a_k-a_{k+1})$).
Here by agreement, $\lambda^{k-1}_k=-\infty$.
In particular, these left jumps are constrained by the interlacing.
For $a_k<a_{k+1}$, the same jumps are performed to the right in a symmetric way:
\begin{equation*}
	\lambda^{k}_i\mapsto \mu^k_i:=
	\lambda^{k+1}_{i}\wedge \lambda^{k-1}_{i-1}
	-
	\mathscr{E}_{a_{k+1}-a_{k}}^i\wedge
	\bigl(
		\lambda^{k+1}_{i}\wedge\lambda^{k-1}_{i-1}
		-
		\lambda^k_i
	\bigr),
	\qquad i=1,\ldots,k ,
\end{equation*}
where, by agreement, $\lambda^{k-1}_0=+\infty$.
Finally, if $a_k=a_{k+1}$, then $\mathscr{S}_k^{a_k-a_{k+1}}$ is the identity operation.

\begin{theorem}[Follows from \Cref{thm:action_of_level_swap_on_Gibbs} below]
	\label{thm:main_swap}
	Assume that $a_k\ne a_{k+1}$. Then the action of the Markov operator
	$\mathscr{S}^{a_k-a_{k+1}}_k$ (with left jumps for $a_k>a_{k+1}$, and right jumps otherwise)
	turns the corners distribution of $G+\mathrm{diag}(a_1,\ldots,a_k,a_{k+1},\ldots,a_N )$
	into the one of $G+\mathrm{diag}(a_1,\ldots,a_{k+1},a_{k},\ldots,a_N )$,
	where $G$ is the $N\times N$ GUE random matrix.
\end{theorem}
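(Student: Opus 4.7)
The plan is to exploit the Gibbs (local conditional) structure of the corners process. As a first step, I will write down the joint density of $\boldsymbol{\lambda}=(\lambda^1,\ldots,\lambda^N)$ under the perturbed GUE. Starting from $p(H)\propto \exp(-\tfrac12\operatorname{tr}((H-\operatorname{diag}(\mathbf{a}))^2))$ and expanding the square, the two nontrivial terms $-\tfrac12\operatorname{tr}(H^2)=-\tfrac12\sum_j(\lambda^N_j)^2$ and $\operatorname{tr}(H\operatorname{diag}(\mathbf{a}))=\sum_i a_ih_{ii}$ both depend on the eigenvalue array alone: the first because $\operatorname{tr}(H^2)=\sum_j(\lambda^N_j)^2$, the second because $h_{ii}=\sigma_i-\sigma_{i-1}$ where $\sigma_k:=\lambda^k_1+\cdots+\lambda^k_k=\operatorname{tr}(H^{(k)})$. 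Integrating out the angular variables via Baryshnikov's fiber-volume computation for GUE then yields
\begin{equation*}
p(\boldsymbol{\lambda};\mathbf{a})\;\propto\;V(\lambda^N)\,\exp\!\Bigl(-\tfrac12\sum_j(\lambda^N_j)^2+a_N\sigma_N+\sum_{j=1}^{N-1}(a_j-a_{j+1})\sigma_j\Bigr)\,\mathbf{1}_{\text{interlacing}},
\end{equation*}
where I have Abel-summed $\sum_i a_ih_{ii}$. The crucial observation is that the pair $(a_k,a_{k+1})$ affects the $\lambda^k$-dependence of this density \emph{only} through the single exponential factor $e^{(a_k-a_{k+1})\sigma_k}$.

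From this I read off the conditional law of $\lambda^k$ given every other level. With $\theta:=a_k-a_{k+1}$ and $\beta_i:=\lambda^{k-1}_i\vee\lambda^{k+1}_{i+1}$, $\alpha_i:=\lambda^{k-1}_{i-1}\wedge\lambda^{k+1}_i$ (using the boundary conventions), the interlacing constraint decouples coordinate-by-coordinate and the density $\prod_i e^{\theta\lambda^k_i}\,\mathbf{1}_{\lambda^k_i\in[\beta_i,\alpha_i]}$ factorises. Hence the $\lambda^k_i$ are conditionally independent truncated exponentials of rate $-\theta$ on $[\beta_i,\alpha_i]$. A short one-dimensional computation (assume $\theta>0$) shows that if $u$ has density $\propto e^{\theta u}$ on $[\beta,\alpha]$, then $\nu:=\beta+\mathscr{E}_\theta\wedge(u-\beta)$ with $\mathscr{E}_\theta$ an independent rate-$\theta$ exponential has density $\propto e^{-\theta\nu}$ on the same interval; this follows directly from $P(\nu-\beta>t)=e^{-\theta t}P(u-\beta>t)$. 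Independence of the $\mathscr{E}^i_\theta$ across $i$ promotes this to the full joint conditional, so $\mathscr{S}^{\theta}_k$ sends the conditional law of $\lambda^k$ under $\mathbf{a}$ to the one under $\mathbf{a}^{\mathrm{swap}}$. The case $\theta<0$ is the mirror image using right jumps.

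To finish the proof I still need to check that the \emph{marginal} of $\boldsymbol{\lambda}\setminus\lambda^k$ is invariant under $a_k\leftrightarrow a_{k+1}$; together with the conditional statement above this gives the theorem by the marginal-times-conditional decomposition, and this is where I expect the main obstacle to lie. Integrating $\lambda^k$ out of the joint density produces a factor $I_k(\theta)=\prod_{i=1}^k\theta^{-1}(e^{\theta\alpha_i}-e^{\theta\beta_i})$, and direct algebra gives the pleasant identity $I_k(\theta)/I_k(-\theta)=e^{\theta\sum_i(\alpha_i+\beta_i)}$. Meanwhile the remaining $\mathbf{a}$-dependent factors $e^{(a_{k-1}-a_k)\sigma_{k-1}+(a_{k+1}-a_{k+2})\sigma_{k+1}}$ pick up an extra multiplicative $e^{\theta(\sigma_{k-1}+\sigma_{k+1})}$ under the swap. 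Marginal invariance therefore reduces to the combinatorial identity
\begin{equation*}
\sum_{i=1}^k(\alpha_i+\beta_i)\;=\;\sigma_{k-1}+\sigma_{k+1},
\end{equation*}
which I plan to verify by reindexing and applying $\min(x,y)+\max(x,y)=x+y$ pairwise, handling the boundary terms arising from $\lambda^{k-1}_0=+\infty$ and $\lambda^{k-1}_k=-\infty$ to obtain the telescoping right-hand side. Once this cancellation is in place, the two marginals agree and the theorem follows.
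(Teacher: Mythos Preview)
Your proposal is correct and follows essentially the same route as the paper: the paper's \Cref{prop:elementary_swap} is your one-dimensional survival-function computation, its \Cref{prop:independent_exponentials} is your factorisation of the conditional law of $\lambda^k$, and the symmetry check in the proof of \Cref{thm:action_of_level_swap_on_Gibbs} (the ``compensates'' step after \eqref{eq:modification_under_harmonic_functions_proof1}) is exactly your identity $\sum_i(\alpha_i+\beta_i)=\sigma_{k-1}+\sigma_{k+1}$ obtained via $\min+\max=\mathrm{sum}$ and telescoping. The only cosmetic difference is that the paper phrases the marginal symmetry as symmetry of $\mathrm{Prob}(\lambda^1,\ldots,\lambda^{k-1}\mid\lambda^{k+1})$ inside the abstract Gibbs framework, whereas you work directly with the full joint density and integrate out $\lambda^k$; these are equivalent since the levels above $k+1$ do not involve $a_k,a_{k+1}$.
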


We establish this theorem by relying on 
a perturbed Gibbs structure of the corners distribution of the matrix $H$.
Namely, it is well-known that in the unperturbed case, the conditional distribution
of the eigenvalues $\lambda^k_i$, $1\le i\le k\le N-1$,
given $\lambda^N$, 
is uniform on the polytope defined by all the interlacing inequalities
(known as the Gelfand-Tsetlin polytope).
In the perturbed case, the Gibbs structure should be deformed in a certain
way by means of the 
parameters $a_i$ (see \Cref{sub:Gibbs_measures}). 
The coupling follows by considering
the conditional distribution
of $\lambda^k$ given two adjacent levels $\lambda^{k\pm 1}$,
which reduces to a collection of 
independent exponential random variables confined to the 
corresponding intervals. Producing a suitable Markov swap
operator for a single such variable (see \Cref{prop:elementary_swap} below),
we arrive at the result of  
\Cref{thm:main_swap}.

\begin{remark}
	Applied twice to $G+\mathrm{diag}(a_1,\ldots,a_N )$, the Markov swap operator 
	from \Cref{thm:main_swap} 
	returns to the same distribution.
	That it, the composition of 
	$\mathscr{S}_k^{a_{k}-a_{k+1}}$
	and 
	$\mathscr{S}_k^{a_{k+1}-a_{k}}$
	does not change the distribution of 
	$G+\mathrm{diag}(a_1,\ldots,a_N )$.
	However, this composition is \emph{not} an identity transformation: two random jumps return a particle to the
	original location with probability~0.
\end{remark}

\subsection{Perturbation by an arithmetic progression}

The perturbed GUE corners distributions 
are compatible for various $N$, and so one can define
the corresponding perturbed GUE corners
distribution on infinite interlacing arrays. 
It depends on an infinite parameter sequence $\mathbf{a}=\{a_i \}_{i\in \mathbb{Z}_{\ge1}}$.
One particular interesting case is when the perturbation
parameters form an arithmetic progression $a_i=-(i-1)\alpha$, where $\alpha>0$.
Swapping $a_1$ with $a_2$, then $a_1$ with $a_3$, and so on all the way to infinity
leads to an additive shift in the perturbation parameters, 
which is equivalent in distribution to a global 
shift:

\begin{theorem}[\Cref{thm:shift_for_alpha_GUE} below]
	\label{thm:main_theorem_shift}
	The action of a sequence of left exponential jumps 
	(where the parameter at level $k$ is taken to be $k\alpha$), from level $1$
	up to infinity, is equivalent in distribution\footnote{Here and below 
	by saying that two operations are ``equivalent in distribution'' we mean that the random elements 
	resulting from both these
	operations, applied to the same initial random element, have the same distribution.}
	to shifting all the 
	elements of the interlacing array by $\alpha$ to the left.
\end{theorem}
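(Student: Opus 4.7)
The plan is to derive \Cref{thm:main_theorem_shift} by iterated application of \Cref{thm:main_swap} followed by a consistency argument. The key preliminary observation is that shifting the infinite array by $\alpha$ to the left corresponds to replacing $H$ by $H-\alpha I=G+\mathrm{diag}(a_1-\alpha,a_2-\alpha,\ldots)$; since $a_i=-(i-1)\alpha$ gives $a_i-\alpha=a_{i+1}$, the shifted distribution equals the perturbed GUE corners distribution associated to the re-indexed parameter sequence $(a_2,a_3,a_4,\ldots)$. The swap sequence must therefore realize this re-indexing by pushing $a_1$ past every subsequent $a_j$.

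Next, I would verify that the prescribed jump parameters $k\alpha$ come out correctly. Since $a_1=0$ is the maximum of the sequence, after the first $k-1$ swaps the value $a_1$ sits at position $k$, while $a_{k+1}=-k\alpha$ remains at position $k+1$. The left exponential jump of \Cref{thm:main_swap} then uses parameter $a_1-a_{k+1}=k\alpha$, exactly matching the statement. By iterating \Cref{thm:main_swap} $M$ times, the distribution of the array after the swaps $\mathscr{S}_1^{\alpha},\ldots,\mathscr{S}_M^{M\alpha}$ coincides with the perturbed GUE corners distribution with parameter sequence $(a_2,\ldots,a_{M+1},a_1,a_{M+2},a_{M+3},\ldots)$.

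The final step is a passage to the limit. For any fixed $K$, the marginal of the perturbed GUE corners distribution on levels $1,\ldots,K$ is the corners distribution of the top-left $K\times K$ principal submatrix of $H$, and hence depends only on the first $K$ perturbation parameters. Therefore, once $M\ge K$, the marginal of the post-$M$-swap distribution on levels $1,\ldots,K$ equals the corners of $[G]_K+\mathrm{diag}(a_2,\ldots,a_{K+1})$. Since each operator $\mathscr{S}_m^{m\alpha}$ with $m>K$ acts only on level $m$ and leaves all other entries untouched, this marginal is already stationary from the swap at level $K$ onward, so it agrees with the marginal of the ``infinite composition.'' It also equals the marginal of the shifted distribution on levels $1,\ldots,K$ by the first paragraph, and since $K$ is arbitrary, the Kolmogorov extension principle concludes the proof.

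The main subtlety is giving precise meaning to a ``sequence of exponential jumps from level $1$ up to infinity,'' as it is the composition of infinitely many Markov operators. However, the stabilization of finite-level marginals after finitely many swaps sidesteps the usual analytic difficulties: the composition is unambiguously defined level by level, and each finite truncation equals a finite composition, so no additional tightness or convergence argument is required.
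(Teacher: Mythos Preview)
Your argument is correct and is genuinely different from the paper's. The paper proves \Cref{thm:shift_for_alpha_GUE} by explicitly tracking the harmonic functions: using \Cref{thm:action_of_level_swap_on_Gibbs} it writes the post-swap harmonic function $\varphi_k'$ as the integral \eqref{eq:modification_under_harmonic_functions}, evaluates this via the Vandermonde identity \eqref{eq:pertGUE_Gibbs_property}, and then carries out a direct density computation to exhibit the factor $e^{-\alpha|\lambda^k|-tk^2\alpha^2/2}$ and recognise it as a translation of the original density. Your route bypasses all of this analysis: you identify the shifted array with the parameter sequence $(a_2,a_3,\ldots)$, observe that $M$ swaps produce $(a_2,\ldots,a_{M+1},a_1,a_{M+2},\ldots)$, and then use the two structural facts that (i) the level-$1,\ldots,K$ marginal of a corners process depends only on $a_1,\ldots,a_K$, and (ii) $\mathscr{S}_m$ with $m>K$ does not touch levels $\le K$, to conclude that the level-$K$ marginal stabilises once $M\ge K$ at exactly the target. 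Your approach is shorter and more conceptual, and it makes transparent \emph{why} the shift emerges (re-indexing of the arithmetic progression), at the cost of being specific to the corners model; the paper's computation, while heavier, operates entirely within the $\mathbf{a}$-Gibbs/harmonic-function framework and would adapt more directly to other Gibbs measures with symmetric harmonic functions.
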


\subsection{Shifting of reflected Brownian motions}

Let $\alpha>0$, and let
let $X_k(t)$, $k=1,2,\ldots $, be reflected Brownian motions constructed as follows.
First, $X_1(t)$ is the standard driftless Brownian motion started from $0$.
Inductively, let $X_k(t)$, $k=2,3,\ldots $,
be a new independent Brownian motion with drift $-(k-1)\alpha$,
and reflected down off of $X_{k-1}(t)$ by means of subtracting 
local time when $X_{k}=X_{k-1}$. For example,
\begin{equation*}
	X_2(t)=X_2^\circ(t)-L_{1,2}(t),
\end{equation*}
where $X_2^\circ(t)$ is the standard Brownian motion, and 
$L_{1,2}(t)=\int_0^t \mathbf{1}_{X_1(s)=X_2(s)}\,dL_{1,2}(s)$
is the continuous non-decreasing process which increases only 
at times when $X_1(s)=X_2(s)$
(in other words, it
is twice the semimartingale local time of $X_1-X_2$ at zero.
We refer to \cite{warren2005dyson},
\cite{Ferrari2014PerturbedGUE}
for further details on the reflection mechanism, and for an explanation on how 
to start all these reflected processes from 
zero (which formally results in infinitely many collisions in finite time).
Almost surely we have $X_1(t)\ge X_2(t)\ge X_3(t) \ldots $ for all $t$.

Fix $t$ and define 
\begin{equation}
	\label{eq:BM_jumps}
	X'_k(t):=X_{k+1}(t)+\mathscr{E}_{k\alpha}\wedge
	(X_{k}(t)-X_{k+1}(t)),\qquad  k=1,2,\ldots ,
\end{equation}
where $\mathscr{E}_{k\alpha}$, $k=1,2,\ldots $, are independent exponential 
random variables with parameters $k\alpha$ (and mean $1 / (k\alpha)$).

\begin{theorem}
	\label{thm:main_theorem_Brownian}
	For each fixed $t$, we have equality of joint distributions
	\begin{equation*}
		\{X_k'(t)\}_{k\in \mathbb{Z}_{\ge1}}
		\stackrel{d}{=}
		\{X_k(t)-\alpha t \}
		_{k\in \mathbb{Z}_{\ge1}}.
	\end{equation*}
\end{theorem}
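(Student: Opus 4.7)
The plan is to reduce \Cref{thm:main_theorem_Brownian} to the array-level shift result \Cref{thm:main_theorem_shift} via a Warren-type identification of the reflected Brownian motions $(X_k(t))_{k\ge 1}$ with the smallest-eigenvalue marginal, at each level, of a time-rescaled perturbed GUE corners process. Concretely, combining \cite{warren2005dyson} and \cite{Ferrari2014PerturbedGUE}, at each fixed $t>0$ one has
\[
(X_k(t))_{k\ge 1}\;\stackrel{d}{=}\;(\lambda^k_k)_{k\ge 1},
\]
where $(\lambda^k_i)_{1\le i\le k}$ is the corners process of $\sqrt{t}\,G+t\,\mathrm{diag}(a_1,a_2,\ldots)$ with $a_i=-(i-1)\alpha$, and $\lambda^k_k$ is the smallest eigenvalue at level $k$. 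The first step of the proof is to invoke this identification.

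The second step is to match the jumps. For the smallest particle at level $k$, the convention $\lambda^{k-1}_k=-\infty$ collapses the left-jump formula from the definition of $\mathscr{S}_k$ to
\[
\lambda^k_k\;\mapsto\;\lambda^{k+1}_{k+1}+\mathscr{E}^k\wedge(\lambda^k_k-\lambda^{k+1}_{k+1}),
\]
which depends only on the smallest particle at the level above and a fresh exponential random variable. Since the operators $\mathscr{S}_1,\mathscr{S}_2,\ldots$ are applied in increasing order of level and each $\mathscr{S}_k$ reads the yet-unmodified level $k+1$, the induced transformation on the smallest particles at all levels coincides exactly with the Brownian-motion jumps \eqref{eq:BM_jumps} under the identification $X_k\leftrightarrow\lambda^k_k$, once the exponential parameters are matched.

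The third step is to apply \Cref{thm:main_theorem_shift} to the rescaled corners process, using $\sqrt{t}\,G+t\,\mathrm{diag}(\mathbf{a})=\sqrt{t}\bigl(G+\sqrt{t}\,\mathrm{diag}(\mathbf{a})\bigr)$. Applying the shift theorem to the un-rescaled corners (whose arithmetic-progression parameter is $\sqrt{t}\alpha$) and then multiplying all coordinates by $\sqrt{t}$, the exponential variables of parameter $k\sqrt{t}\alpha$ become the $\mathscr{E}_{k\alpha}$ of \eqref{eq:BM_jumps}, and the predicted left shift by $\sqrt{t}\alpha$ turns into a left shift by $\sqrt{t}\cdot\sqrt{t}\alpha=t\alpha$. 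Projecting onto the smallest particles at each level yields the desired identity $(X'_k(t))_{k\ge 1}\stackrel{d}{=}(X_k(t)-\alpha t)_{k\ge 1}$.

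The main technical hurdle is making the Warren-type identification precise in the infinite-dimensional setting, in particular the simultaneous start of infinitely many reflected Brownian motions from zero (mentioned as subtle in the introduction); given that identification, the jump matching is transparent thanks to $\lambda^{k-1}_k=-\infty$, and the rescaling amounts to routine bookkeeping of the constants $\sqrt{t}$ and $t$.
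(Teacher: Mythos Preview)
Your proof is correct and follows essentially the same route as the paper: identify $(X_k(t))_k$ with the smallest eigenvalues $(\lambda^k_k)_k$ via the Ferrari--Frings result (the paper's \Cref{prop:connection_to_reflected_BM}), observe that the $i=k$ case of the level-$k$ jump collapses to \eqref{eq:BM_jumps} because $\lambda^{k-1}_k=-\infty$, and then invoke the array shift. Your rescaling in the third step is unnecessary, since the detailed version \Cref{thm:shift_for_alpha_GUE} of \Cref{thm:main_theorem_shift} already carries the time parameter $t$ and gives the shift $\alpha t$ directly; this also dissolves the ``technical hurdle'' you flag, as the identification for general $t$ is exactly what \Cref{prop:connection_to_reflected_BM} provides.
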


In particular, $X_1'(t)=X_2(t)+\mathscr{E}_{\alpha}\wedge(X_1(t)-X_2(t))$
is a normal random variable with mean $(-\alpha t)$ and variance $t$.
To the best of our knowledge, even this result for two processes
(one a usual Brownian motion, and one reflected off it)
is new.

\Cref{thm:main_theorem_Brownian} follows from \Cref{thm:main_theorem_shift}
combined with the connection between the 
reflected 
drifted Brownian motions 
and the perturbed GUE corners process
due to \cite{Ferrari2014PerturbedGUE}.
We recall this connection in detail in \Cref{sub:Brownian} below,
and prove \Cref{thm:main_theorem_Brownian}
in the end of \Cref{sec:proofs_5}.

\medskip

As stated, \Cref{thm:main_theorem_Brownian} assumes that the time $t$ is fixed. 
Indeed, naively taking
independent exponential shifts at different times $t$ would
make the functions $t\mapsto X_k'(t)$ discontinuous. 
It is interesting to see whether a 
stochastic process version
of \Cref{thm:main_theorem_Brownian} holds:
\begin{question}
	Is it possible to construct a Markov operator 
	on whole trajectories $t\mapsto \{X_k(t) \}_{k\in \mathbb{Z}_{\ge1}}$
	which is equivalent in distribution to a shift of 
	reflected Brownian motions as stochastic processes?
\end{question}
Presumably, if such a Markov operator on processes exists,
then its construction could be accomplished using the 
sticky Brownian motion,\footnote{We are grateful to Jon Warren
(personal communication) for
suggesting this connection.}
as exponential random variables arise
in the study of this process, e.g., see Theorem 1 in
\cite{warren1997branching}. 
It seems plausible that
the difference process $t\mapsto X_1(t)-X_1'(t)\ge0$ itself
could be distributed as the sticky Brownian motion, as the single-time distributions
coincide thanks to the results of \cite{warren1997branching} and
\cite[Proposition 14]{howitt_warren2009dynamics}. 
However, it is less clear how to extend this idea to
all differences $t\mapsto X_k(t)-X_k'(t)\ge0$.

\subsection{Related discrete model}
\label{sub:discrete_remarks}

The results of this paper might be 
viewed as a random matrix limit of the 
ones from the recent work
\cite{PetrovSaenz2019backTASEP}.
There, similar Markov swap operators were considered 
on discrete interlacing arrays as in \Cref{fig:interlace}.
A combination of these swap operators together with a 
certain Poisson-type limit
(cf. \Cref{sub:alpha0limit} below)
has lead to a Markov chain
on distributions of TASEP (totally asymmetric simple exclusion process)
which decreases the time parameter. 
The shifting result for reflected drifted Brownian motions 
(\Cref{thm:main_theorem_Brownian})
may be viewed as a certain analogue of the TASEP reversal property.
In the Brownian case, instead of decreasing the time, the 
exponential jumps lead to a deterministic shift.

It should be pointed out that 
even though the discrete stochastic systems in 
\cite{PetrovSaenz2019backTASEP}
converge to the reflected Brownian motions \cite{GorinShkolnikov2012}
(and \cite{Ferrari2014PerturbedGUE} in the drifted case),
here we do not rely on this convergence or the results of
\cite{PetrovSaenz2019backTASEP}. Instead we obtain the results 
independently using basic mechanisms related to the (perturbed) Gibbs property.

\subsection{Unperturbed case}
\label{sub:alpha0limit}

In the arithmetic progression setting
$a_i=-(i-1)\alpha$ with $\alpha>0$,
when $\alpha\searrow0$, the perturbed GUE corners process 
of $H=G+\mathrm{diag}(0,-\alpha,-2\alpha,\ldots )$
becomes the usual GUE corners process,
and the system 
of reflected Brownian motions $\{X_k(t)\}_{k\in \mathbb{Z}_{\ge1}}$
becomes driftless.
It would be very interesting to see whether the Markov operators
considered in the present paper 
have meaningful limits as $\alpha\searrow0$.
However, this limit presents certain immediate issues 
which we discuss now. 

\medskip

For simplicity, consider the 
Brownian motion setup. Fix $t>0$ and suppress this parameter in the notation.
As $\alpha\to0$, 
the Markov operator $X_k\mapsto X_k'$ \eqref{eq:BM_jumps}
turns into the (deterministic)
identity operator $X_k\mapsto X_k$.
Indeed, this is because 
$\mathrm{Prob}(\mathscr{E}_{k\alpha}>x)=e^{-k\alpha x}\sim 1-\alpha k x$ for all $k$ and $x$,
and so the minimum in \eqref{eq:BM_jumps}
is equal to $X_k-X_{k+1}$ with probability of order 
$1-O(\alpha)$.
Arguing similarly to the 
discrete case considered in \cite[Section 6]{PetrovSaenz2019backTASEP},
one can apply the map \eqref{eq:BM_jumps} 
a large number $\lfloor \tau/\alpha \rfloor $
of times, where $\tau\in \mathbb{R}_{>0}$ is the scaled time.

Taking a Poisson-type limit should lead to a continuous time 
Markov process (with $\tau$ as the new time parameter)
under which $X_k$ has an exponential clock of rate $k(X_k-X_{k+1})$,
and when the clock rings, $X_k$ instantaneously jumps
into $X_k'$ selected uniformly from $[X_{k+1},X_k]$.
This jumping mechanism is known as the \emph{Hammersley process}
\cite{hammersley1972few}, \cite{aldous1995hammersley}.
However, applying this continuous time 
jumping process to the whole
system $\{X_k \}_{k\in \mathbb{Z}_{\ge1}}$
is problematic, as it leads to \emph{infinitely many jumps in finite time}
due to the growing jump rates $k(X_k-X_{k+1})$ as $k\to\infty$. 
Moreover, under this hypothetical process $X_k$ would 
depend on all $X_j$ for $j>k$, and so one cannot simply restrict
the dynamics to finitely many particles where it would make sense.

On the other hand, by \Cref{thm:main_theorem_Brownian},
the hypothetical continuous time dynamics should be equivalent in distribution
to a deterministic shift of the (driftless) reflected Brownian motions
by 
$-\alpha t\lfloor \tau/\alpha \rfloor 
\sim -t\tau$. 
It is reasonable to expect that such a deterministic shift of infinitely
many $X_k$'s
cannot be achieved
only by finitely many jumps in finite time.
To summarize,

\begin{question}
	Do there exist well-defined $\alpha\searrow0$ limits of the Markov operators acting 
	on the GUE corners process perturbed by an arithmetic progression $a_i=-(i-1)\alpha$
	or on the reflected drifted Brownian motions? 
	These hypothetical limits should act on (much more studied)
	unperturbed GUE corners process and driftless reflected Brownian motions.
\end{question}

\subsection{Acknowledgments}

We are grateful to
Krzysztof Burdzy, Christian Gromoll,  
Grigori Olshanski
and Jon Warren,
for 
helpful discussions.
We acknowledge the hospitality of
the organizers of the 
Workshop on Classical and Quantum Integrable Systems (CQIS-2019)
at 
Euler Institute, Saint Petersburg,
where this work was started.
Both authors were partially supported by
the NSF grant DMS-1664617.

\section{Perturbed GUE corners process}
\label{sec:perturbed_GUE_corners_recall}

This section is preliminary. We 
recall the 
perturbed GUE corners process \cite{Ferrari2014PerturbedGUE}
(also called the GUE corners process with external source \cite{adler2013random}),
and its connection
to reflected Brownian motions with drifts.
The original, unperturbed GUE corners process is due to 
\cite{johansson2006eigenvalues},
\cite{johansson2006eigenvaluesErratum},
and it was linked to driftless reflected Brownian motions in 
\cite{warren2005dyson}.

\subsection{Matrix model}
\label{sub:matrix_model}

Take a 
time parameter $t>0$ and 
an infinite sequence of parameters
\begin{equation*}
	\mathbf{a}=(a_1,a_2,\ldots ),\qquad a_i\in \mathbb{R}.
\end{equation*}
Unless otherwise indicated, we assume that the parameters
$a_i$ are pairwise distinct.
Consider a random matrix $H=t^{1/2}\cdot G+t\cdot\mathrm{diag}\left( \mathbf{a} \right)$ of infinite size
with entries:
\begin{equation*}
	H_{kl}=
	\begin{cases}
		t^{1/2}g_{kk}+t\mu_k,& k=l;\\
		t^{1/2}g_{kl},&k<l;\\
		t^{1/2}\overline{g}_{lk},&k>l.
	\end{cases}
\end{equation*}
Here $g_{kk}$ are independent real standard normal 
random variables, and $g_{kl}$ are 
independent complex standard normal random variables
(that is, their real and imaginary parts are independent real 
normal random variables each with mean $0$ and variance $\frac{1}{2}$).
The matrix $H$ is Hermitian.

For each $m\in \mathbb{Z}_{\ge1}$, take the $m\times m$ principal corner 
$[H_{kl}]_{1\le k,l\le m}$
of the infinite matrix
$H$. Let $\lambda^m=(\lambda^{m}_1\ge \ldots\ge\lambda^{m}_{m})$,
$\lambda^{m}_{i}\in \mathbb{R}$,
be its eigenvalues. At adjacent levels, the eigenvalues \emph{interlace}
(notation $\lambda^m\prec \lambda^{m+1}$):
\begin{equation}
	\label{eq:interlace}
	\lambda^{m+1}_{m+1}\le \lambda^{m}_{m}\le \lambda^{m+1}_{m}\le 
	\lambda^{m}_{m-1}\le
	\ldots\le \lambda^{m+1}_2\le \lambda^{m}_1\le\lambda^{m+1}_1 .
\end{equation}
We call the joint distribution of 
all $\{\lambda^{k}_j\}_{1\le j\le k<\infty}$
the \emph{perturbed GUE corners process}.

\subsection{Joint eigenvalue density}

A standard application of the Harish-Chandra-Itsykson-Zuber
integral shows that the joint eigenvalue density of 
$\{\lambda^{N}_i\}_{i=1}^{N}$
at a fixed level $N$ is 
given by 
\begin{equation}
	\label{eq:lambda_N_density}
	\mathsf{Density}(\lambda^N)=
	\mathrm{const}\times
	\det\left[ \exp\left\{ 
			-\frac{(\lambda_i^{N}-t a_j)^2}{2t}
	\right\} \right]_{i,j=1}^{N}
	\frac{\mathsf{V}(\lambda_1^{N},\ldots,\lambda^{N}_N )}{\mathsf{V}(a_1,\ldots,a_N )},
\end{equation}
where the normalizing constant does not depend on $a_1,\ldots,a_N$.
Here and throughout the paper we use the notation 
\begin{equation*}
	\mathsf{V}(b_1,\ldots,b_N )=
	\prod_{1\le i<j\le N}(b_i-b_j)
\end{equation*}
for the Vandermonde determinant.

Observe from \eqref{eq:lambda_N_density}
that the distribution of
$\{ \lambda^{N}_j \}_{i=1}^{N}$
depends on the parameters $a_i$ in a symmetric way.
This should indeed be the case, since the distribution of the 
eigenvalues of 
the $N\times N$ matrix
$t^{1/2}\, G_{N\times N}+t\,\mathrm{diag}(a_1,\ldots,a_N )$
does not depend on the order of the $a_i$'s due to the unitary
invariance of $G_{N\times N}$.
The main goal of this paper is to 
explore this distributional symmetry from a Markov
operator point of view. For this, we
will need the joint distribution of eigenvalues of all
corners:

\begin{proposition}[{\cite[Proposition 2.3]{Ferrari2014PerturbedGUE}}]
	\label{prop:FF_joint_density}
	The joint density of the eigenvalues $\{\lambda^{k}_j\}_{1\le j\le k\le N}$
	at the first $N$ levels, where $N\in \mathbb{Z}_{\ge1}$
	is arbitrary, has the following form:
	\begin{equation}
		\label{eq:pert_GUE_joint}
			\mathrm{const}\times
			\mathsf{V}(\lambda_1^N,\ldots,\lambda^N_N )
			\prod_{i=1}^{N}
			e^{-ta_i^2/2-
			(\lambda^{N}_i)^2/(2t)}
			\exp\Bigl\{ |\lambda^N|\,a_N+\sum_{k=1}^{N-1}|\lambda^k|\,(a_k-a_{k+1}) \Bigr\}
	\end{equation}
	where 
	we use the notation
	$|\lambda^k|:=\lambda^k_1+\lambda^k_2+\ldots+\lambda^k_k $,
	and the normalizing constant does not depend on $a_1,\ldots,a_N$.
\end{proposition}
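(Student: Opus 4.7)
The plan is to prove \eqref{eq:pert_GUE_joint} by induction on $N$, via a bordering argument that computes the conditional density of $\lambda^N$ given $\lambda^{N-1}$. The base case $N=1$ is immediate: both \eqref{eq:lambda_N_density} and the $N=1$ specialisation of \eqref{eq:pert_GUE_joint} reduce to a single Gaussian of mean $ta_1$ and variance $t$. For the inductive step, I write
\[
H_N = \begin{pmatrix} H_{N-1} & v \\ v^* & h_{NN} \end{pmatrix}, \qquad h_{NN} = t^{1/2} g_{NN} + t a_N,
\]
with $v \in \mathbb{C}^{N-1}$ of i.i.d.\ centered complex Gaussian entries of variance $t$. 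The unitary invariance of the law of $v$ makes the conditional law of $\lambda^N$ given $H_{N-1}$ a function of $\lambda^{N-1}$ alone, so I may pass to a basis in which $H_{N-1}$ is diagonal with entries $\lambda^{N-1}_i$. In particular, this conditional is independent of $\lambda^1,\ldots,\lambda^{N-2}$, so it suffices to multiply it by the inductive hypothesis.

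In the diagonal basis the secular equation
\[
\det(\lambda I - H_N) = (\lambda - h_{NN}) \prod_{i=1}^{N-1}(\lambda - \lambda^{N-1}_i) - \sum_{i=1}^{N-1} |v_i|^2 \prod_{j \neq i}(\lambda - \lambda^{N-1}_j)
\]
identifies a bijection $(h_{NN}, |v_1|^2, \ldots, |v_{N-1}|^2) \leftrightarrow (\lambda^N_1, \ldots, \lambda^N_N)$ onto the interlacing region $\{\lambda^{N-1} \prec \lambda^N\}$ (the phases of the $v_i$ integrate out into an overall constant), whose Jacobian is a Cauchy-type determinant that simplifies, up to constants, to $\mathsf{V}(\lambda^N)/\mathsf{V}(\lambda^{N-1})$. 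Matching $\mathrm{tr}\,H_N$ and $\mathrm{tr}\,H_N^2$ against the block decomposition yields the identities
\[
h_{NN} = |\lambda^N| - |\lambda^{N-1}|, \qquad h_{NN}^2 + 2|v|^2 = {\textstyle\sum_i}(\lambda^N_i)^2 - {\textstyle\sum_i}(\lambda^{N-1}_i)^2,
\]
which recast the Gaussian weight $\exp(-(h_{NN}-ta_N)^2/(2t) - |v|^2/t)$ as
\[
\exp\bigl(a_N(|\lambda^N|-|\lambda^{N-1}|) - \tfrac{ta_N^2}{2} - \tfrac{1}{2t}{\textstyle\sum_i}(\lambda^N_i)^2 + \tfrac{1}{2t}{\textstyle\sum_i}(\lambda^{N-1}_i)^2\bigr).
\]
Multiplying the inductive hypothesis at level $N-1$ by this conditional density, the factor $\mathsf{V}(\lambda^{N-1})$ cancels, the Gaussians extend to $\prod_{i=1}^N e^{-ta_i^2/2 - (\lambda^N_i)^2/(2t)}$, and the exponential sum telescopes: the term $a_{N-1}|\lambda^{N-1}|$ from the hypothesis combines with the new contribution $a_N|\lambda^N|-a_N|\lambda^{N-1}|$ to upgrade $\sum_{k=1}^{N-2}(a_k-a_{k+1})|\lambda^k|$ to $\sum_{k=1}^{N-1}(a_k-a_{k+1})|\lambda^k|$ and to supply the residual $a_N|\lambda^N|$. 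This is exactly \eqref{eq:pert_GUE_joint}.

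The main obstacle is bookkeeping the Jacobian $\partial(h_{NN}, |v_i|^2)/\partial \lambda^N$: written out as a bordered Cauchy-type matrix, it must be shown to collapse to $\mathsf{V}(\lambda^N)/\mathsf{V}(\lambda^{N-1})$ (up to signs and factors of $|v_i|^2$ which are reabsorbed via the expression for $|v_i|^2$ obtained from the secular equation by evaluating at $\lambda=\lambda^{N-1}_i$, namely $|v_i|^2 = -\prod_k(\lambda^{N-1}_i-\lambda^N_k)/\prod_{j\ne i}(\lambda^{N-1}_i-\lambda^{N-1}_j)$). This is a standard but lengthy linear-algebra step already carried out in the unperturbed case in Baryshnikov's derivation of the GUE minors process; the presence of the diagonal perturbation shifts only the Gaussian mean of $h_{NN}$ and thus does not affect the Jacobian computation, so the only novelty is the recombination of scalar exponential factors in the exponent, handled above by the two trace identities.
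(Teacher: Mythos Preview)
The paper does not prove this proposition at all: it is quoted verbatim from \cite[Proposition 2.3]{Ferrari2014PerturbedGUE} and used as a black box. So there is no ``paper's own proof'' to compare against; you have supplied one where the paper chose to cite.

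Your argument is the standard and correct one. The bordering/rank-one update approach, diagonalising $H_{N-1}$ by unitary invariance of the Gaussian column $v$, passing to $(h_{NN},|v_1|^2,\ldots,|v_{N-1}|^2)$, and reading off the eigenvalue map from the secular equation, is exactly how such corner-process densities are derived (Baryshnikov, Neretin, Forrester--Rains). The two trace identities are the clean way to convert the Gaussian weight on $(h_{NN},v)$ into the eigenvalue variables, and your telescoping of the exponential sum is correct: $a_{N-1}|\lambda^{N-1}|+a_N|\lambda^N|-a_N|\lambda^{N-1}|=a_N|\lambda^N|+(a_{N-1}-a_N)|\lambda^{N-1}|$ is precisely the new summand. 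The observation that the diagonal perturbation enters only through the mean of $h_{NN}$, leaving the Jacobian untouched, is the key point that makes the perturbed case no harder than the unperturbed one.

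One small clarification worth making explicit if you write this up: the raw Jacobian of $(h_{NN},|v_1|^2,\ldots,|v_{N-1}|^2)\mapsto\lambda^N$ is not literally $\mathsf{V}(\lambda^N)/\mathsf{V}(\lambda^{N-1})$; it is only after combining with the residue formula $|v_i|^2=-\prod_k(\lambda^{N-1}_i-\lambda^N_k)/\prod_{j\ne i}(\lambda^{N-1}_i-\lambda^{N-1}_j)$ (which you correctly record) and the resulting product over $i$ that the expression collapses to this ratio. You acknowledge this in your final paragraph, so there is no gap, but the phrasing ``the Jacobian \ldots\ simplifies to $\mathsf{V}(\lambda^N)/\mathsf{V}(\lambda^{N-1})$'' slightly conflates two steps.
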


\subsection{Reflected Brownian motions}
\label{sub:Brownian}

Fix a perturbation sequence $\mathbf{a}=\{a_i \}_{i\in \mathbb{Z}_{\ge1}}$.
Consider a family of interacting Brownian motions 
$B^k_j$, $1\le j\le k<\infty$, such that:
\begin{itemize}
	\item All processes start from zero.
	\item The processes $B^k_j$, $j=1,\ldots,k $
		have the same drift $a_k$.
	\item The evolution of each $B^k_j$
		does not depend on any of the $B^l_i$'s with $l>i$.
	\item The processes interact only through their local times.
		That is, when the processes are sufficiently far apart,
		each $B^k_j$ behaves as an independent Brownian motion 
		with drift $a_k$.
	\item Each $B^k_j$ belongs to the segment
		$[B^{k-1}_j,B^{k-1}_{j-1}]$\footnote{If
			one or both ends of the segment are not defined,
			they should be replaced with infinity of appropriate sign.}
		and reflects off 
		both 
		$B^{k-1}_j$ and $B^{k-1}_{j-1}$.
		Therefore, at each time $t$, the 
		random variables
		$\{B^k_j(t)\}_{1\le j\le k<\infty}$ almost surely form
		an interlacing array as in \Cref{fig:interlace}.
\end{itemize}

We refer to 
\cite[Section 4]{Ferrari2014PerturbedGUE}
(and \cite{warren2005dyson} in the driftless case)
for details on the reflection mechanism.

\begin{proposition}[{\cite[Theorem 2]{Ferrari2014PerturbedGUE}}]
	\label{prop:connection_to_reflected_BM}
	At each fixed time moment $t\in \mathbb{R}_{\ge0}$, we have equality
	of joint distributions of two infinite interlacing arrays:
	\begin{equation*}
		\{B^k_j(t)\}_{1\le j\le k<\infty}
		\stackrel{d}{=}
		\{\lambda^k_j\}_{1\le j\le k<\infty},
	\end{equation*}
	where the right-hand side is the perturbed GUE
	corners process with the same time parameter $t$
	and perturbation sequence $\mathbf{a}$.
\end{proposition}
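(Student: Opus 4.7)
The plan is to reduce the statement to the driftless case---which is Warren's theorem \cite{warren2005dyson} combined with the Johansson--Nordenstam identification of the driftless reflected array with the ordinary GUE corners process \cite{johansson2006eigenvalues}---and then reinstate the level-dependent drifts through a multilevel Girsanov change of measure. Concretely, I would first realize each $B^k_j$ as the unique Skorokhod solution of the SDE
\[
	dB^k_j=dW^k_j+a_k\,dt+dL^{k,j}_\uparrow-dL^{k,j}_\downarrow,
\]
driven by independent standard Brownian motions $W^k_j$, where $L^{k,j}_\uparrow$ and $L^{k,j}_\downarrow$ are the non-decreasing local times pushing $B^k_j$ off its lower and upper reflecting walls $B^{k-1}_j$ and $B^{k-1}_{j-1}$ respectively. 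The construction proceeds level by level because $B^k_j$ only interacts with already-constructed lower levels, and the entrance from the degenerate all-zero initial condition is handled by monotone approximation from spread-out initial data, exactly as in \cite{warren2005dyson,Ferrari2014PerturbedGUE}.

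On the fixed interval $[0,t]$ I apply Girsanov's theorem simultaneously to each $W^k_j$ to remove its drift $a_k$. Under the new measure $P^0$, the same coordinate process $(B^k_j)$ has the law of the driftless reflected array, whose time-$t$ distribution coincides with the $\mathbf a\equiv 0$ specialization of \eqref{eq:pert_GUE_joint} by Warren's theorem. The Radon--Nikodym derivative is
\[
	\frac{dP^{\mathbf a}}{dP^0}\bigg|_{\mathcal F_t}=\prod_{k,j}\exp\!\bigl(a_k\widehat W^k_j(t)-\tfrac12 a_k^2 t\bigr),\qquad \widehat W^k_j(t)=B^k_j(t)-L^{k,j}_\uparrow(t)+L^{k,j}_\downarrow(t),
\]
where $\widehat W^k_j$ denotes the $P^0$-Brownian motion driving $B^k_j$. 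Summing the exponents across all particles, and verifying that the local-time contributions integrate out to a deterministic constant once averaged over the $P^0$-distribution of paths conditioned on the time-$t$ configuration, reduces the exponent to $\sum_{k=1}^N a_k|B^k(t)|-\tfrac12\sum_{k=1}^N k a_k^2 t$. An Abel summation rewrites $\sum_k a_k|B^k(t)|=a_N|B^N(t)|+\sum_{k=1}^{N-1}(a_k-a_{k+1})|B^k(t)|$, which matches the exponential tilt in \eqref{eq:pert_GUE_joint}; the Gaussian factor $\mathsf V(\lambda^N)\prod_i e^{-(\lambda^N_i)^2/(2t)}$ is inherited from Warren's driftless density, and $\prod_i e^{-ta_i^2/2}$ is furnished by the Girsanov compensator.

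The main technical obstacle is precisely the local-time step above: the quantities $L^{k,j}_\uparrow(t)$ and $L^{k,j}_\downarrow(t)$ are path-functionals that do not reduce to functions of the time-$t$ configuration, so one has to integrate them out against the $P^0$-law of trajectories with prescribed endpoints---a delicate computation because boundaries at adjacent levels interact asymmetrically (only the level-$k$ particle is pushed by a collision with level $k-1$). An alternative route that avoids the local-time bookkeeping is a Dynkin-style intertwining argument in the spirit of \cite{warren2005dyson}: construct a Markov kernel from the top-level distribution $\lambda^N$ to the interlacing array below, given by the conditional eigenvalue distributions of the principal corners of $H_N=t^{1/2}G+t\,\mathrm{diag}(a_1,\ldots,a_N)$, and verify that this kernel intertwines the drifted Dyson Brownian motion (the evolution of $\lambda^N$) with the Markov semigroup of the reflected drifted array. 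Uniqueness of a Markov semigroup compatible with a given intertwining and the common initial condition (all particles at $0$) then yields equality of joint distributions at every time $t$, which is the claim.
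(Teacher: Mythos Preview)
The paper does not prove this proposition at all: it is quoted verbatim as \cite[Theorem~2]{Ferrari2014PerturbedGUE} and used as a black box. So there is no ``paper's own proof'' to compare against; the relevant benchmark is the argument in \cite{Ferrari2014PerturbedGUE}, which proceeds by explicit transition-density / intertwining computations extending Warren's driftless analysis, essentially your second proposed route.

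Your primary route via Girsanov has a genuine gap at exactly the point you flag. The Radon--Nikodym derivative involves $\sum_{k,j} a_k\widehat W^k_j(t)$, and since the reflection is \emph{one-sided} (a collision between $B^k_j$ and $B^{k-1}_j$ pushes only the level-$k$ particle), the local-time terms do \emph{not} cancel in pairs across levels. Hence the exponent is a genuine path functional, and the assertion that ``the local-time contributions integrate out to a deterministic constant once averaged over the $P^0$-distribution of paths conditioned on the time-$t$ configuration'' is doing all the work and is unjustified. In fact it is not a priori clear why such a conditional expectation should depend on the endpoint only through the multiplicative factor $\exp\{\sum_k a_k|B^k(t)|\}$; establishing this would amount to proving the theorem by other means. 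Absent that computation, the Girsanov argument does not close.

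Your alternative intertwining sketch is the correct strategy and is what \cite{Ferrari2014PerturbedGUE} actually carries out, but as written it is only an outline: one must exhibit the explicit kernel, verify the intertwining identity between the drifted Dyson semigroup and the reflected-array semigroup, and handle the singular entrance law from zero. None of this is done here. For the purposes of the present paper, simply citing \cite[Theorem~2]{Ferrari2014PerturbedGUE} (as the authors do) is the appropriate move.
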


\section{Gibbs measures}

In this section we place the perturbed GUE corners
process into a wider family of 
Gibbs measures on interlacing arrays.

\subsection{Gibbs property and harmonic functions}
\label{sub:Gibbs_measures}

A measure on infinite interlacing arrays
$\{\lambda^k_j \}_{1\le j\le k<\infty}$
(satisfying inequalities \eqref{eq:interlace} between any two consecutive levels)
is called \emph{$\mathbf{a}$-Gibbs} if for each 
$N$ and any fixed configuration $\lambda^N$ at level $N$, the 
density of the conditional 
distribution of all the lower entries of the array has the form
\begin{equation}
	\label{eq:a_Gibbs}
	\begin{split}
		&
		\mathsf{Density}(\lambda^1,\ldots,\lambda^{N-1}\mid \lambda^N )=
		\frac{\mathsf{V}(a_1,\ldots,a_N )}{\det[\exp\{ a_i \lambda^N_j \}]_{i,j=1}^N}
		\\&\hspace{70pt}\times
		\exp\left\{ |\lambda^N|\,a_N+\sum_{k=1}^{N-1}|\lambda^k|\,(a_k-a_{k+1}) \right\}
		\mathbf{1}_{\lambda^1\prec \lambda^2\prec \ldots\prec\lambda^{N-1}\prec\lambda^N }
	\end{split}
\end{equation}
(if some of the $\lambda_i^N$'s are equal, the density 
would have delta components and formula \eqref{eq:a_Gibbs}
should be understood in a limiting sense).
Here and below $\mathbf{1}_{B}$ stands for the indicator of an event $B$.
\Cref{prop:FF_joint_density} implies that
the perturbed GUE 
corners process is an example of an 
$\mathbf{a}$-Gibbs measure.
\begin{remark}
	The fact that the density \eqref{eq:a_Gibbs} integrates to $1$
	in $\lambda^1,\ldots,\lambda^{N-1} $ can be checked by induction on $N$.
\end{remark}
\begin{remark}
	When $a_i\equiv a$ are all equal to each other, 
	the 
	$\mathbf{a}$-Gibbs property becomes the usual Gibbs
	property, with \eqref{eq:a_Gibbs} replaced by the uniform conditioning
	provided that the configurations $\lambda^1,\ldots,\lambda^{N-1},\lambda^N $ interlace.
	A classification of uniform Gibbs measures on interlacing arrays is
	due to \cite{OlVer1996}.
	In fact, performing a suitable exponential change of variables,
	one can see that when $\mathbf{a}$ is an arithmetic progression,
	the space of
	$\mathbf{a}$-Gibbs measures is essentially the same as in the uniform case.
	This is somewhat parallel to how the two-sided $q$-Gelfand-Tsetlin graph
	degenerates to the ``graph of spectra''
	\cite{gorin2016quantization}, \cite{olshanski2016extended}.
\end{remark}

To each $\mathbf{a}$-Gibbs
measure we can associate a family of $\mathbf{a}$-\emph{harmonic functions}
as follows:
\begin{equation}
	\label{eq:a_harmonic_defn}
	\varphi_N(\lambda^N):=
	\frac{\mathsf{V}(a_1,\ldots,a_N )}{\det[\exp\{a_i\lambda^N_j \}]_{i,j=1}^N}
	\,\mathsf{Density}(\lambda^N),\qquad N=1,2,\ldots, 
\end{equation}
where $\mathsf{Density}(\lambda^N)$ is the marginal density of $\lambda^N$.
The term ``harmonic function'' comes from the Vershik--Kerov 
theory of boundaries of branching graphs, cf. \cite{Kerov1998}.
Harmonicity means that 
the functions satisfy a version of a mean value theorem associated to a directed graph Laplacian.
In the context of random matrices the discrete graph is replaced by a suitable continuous
analogue, and the graph Laplacian becomes an integral operator.
In other words, 
since the $\mathbf{a}$-harmonic functions $\varphi_N$ for different $N$'s
come from the same Gibbs measure, they must be
consistent in the following sense:
\begin{lemma}
	\label{lemma:Gibbs_consistency}
	For all $N\ge2$ we have
	\begin{equation}
		\label{eq:Gibbs_consistency}
		\varphi_{N-1}(\lambda^{N-1})=
		\int_{\lambda^N\colon \lambda^N\succ \lambda^{N-1}}
		\varphi_N(\lambda^N)
		\,e^{a_N(|\lambda^N|-|\lambda^{N-1}|)}d\lambda^N.
	\end{equation}
\end{lemma}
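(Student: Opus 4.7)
The plan is to derive the consistency \eqref{eq:Gibbs_consistency} from the definition \eqref{eq:a_harmonic_defn} of $\varphi_N$ and the Gibbs formula \eqref{eq:a_Gibbs}, together with the fact (noted in the remark after \eqref{eq:a_Gibbs}) that \eqref{eq:a_Gibbs} integrates to $1$ in $\lambda^1,\ldots,\lambda^{N-1}$.

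First, I would compute the marginal conditional density of $\lambda^{N-1}$ given $\lambda^N$ by integrating the variables $\lambda^1,\ldots,\lambda^{N-2}$ out of the $N$-level Gibbs density \eqref{eq:a_Gibbs}. The only factors involving these variables are $\prod_{k=1}^{N-2}e^{|\lambda^k|(a_k-a_{k+1})}$ and the interlacing indicator $\mathbf{1}_{\lambda^1\prec\cdots\prec\lambda^{N-1}}$. But the Gibbs formula at the lower level $N-1$ says precisely that
\begin{equation*}
\frac{\mathsf{V}(a_1,\ldots,a_{N-1})}{\det[e^{a_i\lambda^{N-1}_j}]_{i,j=1}^{N-1}}\,
e^{|\lambda^{N-1}|a_{N-1}+\sum_{k=1}^{N-2}|\lambda^k|(a_k-a_{k+1})}\,
\mathbf{1}_{\lambda^1\prec\cdots\prec\lambda^{N-1}}
\end{equation*}
integrates to $1$ in $\lambda^1,\ldots,\lambda^{N-2}$. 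Solving this for $\int\prod_{k=1}^{N-2} e^{|\lambda^k|(a_k-a_{k+1})}\mathbf{1}_{\lambda^1\prec\cdots\prec\lambda^{N-1}}\,d\lambda^1\cdots d\lambda^{N-2}$ and substituting back yields
\begin{equation*}
\mathsf{Density}(\lambda^{N-1}\mid\lambda^N)=
\frac{\mathsf{V}(a_1,\ldots,a_N)}{\mathsf{V}(a_1,\ldots,a_{N-1})}\,
\frac{\det[e^{a_i\lambda^{N-1}_j}]_{i,j=1}^{N-1}}{\det[e^{a_i\lambda^N_j}]_{i,j=1}^{N}}\,
e^{a_N(|\lambda^N|-|\lambda^{N-1}|)}\,\mathbf{1}_{\lambda^{N-1}\prec\lambda^N}.
\end{equation*}

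Next, I would write $\mathsf{Density}(\lambda^{N-1})=\int\mathsf{Density}(\lambda^N)\,\mathsf{Density}(\lambda^{N-1}\mid\lambda^N)\,d\lambda^N$, divide both sides by $\det[e^{a_i\lambda^{N-1}_j}]_{i,j=1}^{N-1}/\mathsf{V}(a_1,\ldots,a_{N-1})$, and use the definition \eqref{eq:a_harmonic_defn}. The Vandermonde ratio and the $\det[e^{a_i\lambda^{N-1}_j}]$ factor cancel, leaving $\mathsf{Density}(\lambda^N)\cdot\mathsf{V}(a_1,\ldots,a_N)/\det[e^{a_i\lambda^N_j}]_{i,j=1}^N=\varphi_N(\lambda^N)$ under the integral, with the remaining factor $e^{a_N(|\lambda^N|-|\lambda^{N-1}|)}\mathbf{1}_{\lambda^N\succ\lambda^{N-1}}$. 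This is exactly \eqref{eq:Gibbs_consistency}.

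The argument is essentially a bookkeeping exercise; there is no genuine obstacle beyond keeping track of the normalizing determinants and the Vandermonde factors. The only conceptual point is the self-referential use of the Gibbs property: the $(N-1)$-level instance of \eqref{eq:a_Gibbs} is what provides the closed-form value for the integral $\int\prod_{k=1}^{N-2}e^{|\lambda^k|(a_k-a_{k+1})}\mathbf{1}_{\lambda^1\prec\cdots\prec\lambda^{N-1}}\,d\lambda^1\cdots d\lambda^{N-2}$, and this value is precisely the combination of determinants needed to re-express the marginalization as a statement about $\varphi_N$ and $\varphi_{N-1}$.
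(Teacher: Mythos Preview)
Your proof is correct and follows essentially the same route as the paper's: write the joint density of $(\lambda^1,\ldots,\lambda^N)$ as $\mathsf{Density}(\lambda^N)$ times the conditional \eqref{eq:a_Gibbs}, integrate out $\lambda^1,\ldots,\lambda^{N-2}$ using the $(N-1)$-level normalization of \eqref{eq:a_Gibbs}, and then identify the remaining $\lambda^N$-integral with $\varphi_{N-1}$ via \eqref{eq:a_harmonic_defn}. Your write-up simply makes explicit the intermediate formula for $\mathsf{Density}(\lambda^{N-1}\mid\lambda^N)$ that the paper leaves implicit.
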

Identity \eqref{eq:Gibbs_consistency} should be 
viewed as a version of the mean value theorem, as discussed before
\Cref{lemma:Gibbs_consistency}.
\begin{proof}[Proof of \Cref{lemma:Gibbs_consistency}]
	The claim follows by writing down the joint distribution of
	$\lambda^1,\ldots,\lambda^N$ through $\varphi_N$ and the 
	conditional distribution \eqref{eq:a_Gibbs},
	and then integrating out $\lambda^1,\ldots,\lambda^{N-2}$ 
	(this produces the factor 
	$\mathsf{V}(a_1,\ldots,a_{N-1} )/\det[\exp\{ a_i\lambda^{N-1}_j \}]_{i,j=1}^{N-1}$)
	and $\lambda^{N}$
	to get the marginal density of $\lambda^{N-1}$.
	The resulting marginal density is expressed through
	$\varphi_{N-1}$ via \eqref{eq:a_harmonic_defn}, 
	which yields the result.
\end{proof}

\begin{lemma}
	\label{prop:Gibbs_symmetry}
	For an $\mathbf{a}$-Gibbs measure,
	let each $\varphi_k$ depend on $a_1,\ldots,a_k $ in a symmetric way.
	Then the
	distribution of $\lambda^k$, where $k\in \mathbb{Z}_{\ge1}$ is fixed,
	depends on the parameters $a_1,\ldots,a_k$ in a symmetric way, too.
\end{lemma}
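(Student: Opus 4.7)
The plan is to simply unwind the definition of $\varphi_k$ given in \eqref{eq:a_harmonic_defn} and observe that the marginal density of $\lambda^k$ factors as a product of two quantities, each symmetric in $a_1,\ldots,a_k$.

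First I would rearrange the defining formula \eqref{eq:a_harmonic_defn} for $\varphi_k$ to express the marginal density explicitly:
\begin{equation*}
    \mathsf{Density}(\lambda^k) = \varphi_k(\lambda^k) \cdot \frac{\det\bigl[\exp\{a_i\lambda^k_j\}\bigr]_{i,j=1}^{k}}{\mathsf{V}(a_1,\ldots,a_k)}.
\end{equation*}
By hypothesis, the first factor $\varphi_k(\lambda^k)$ is symmetric in $a_1,\ldots,a_k$, so it only remains to show the second factor is symmetric in $a_1,\ldots,a_k$ as well. This is the heart of the argument, but it is immediate from the bialternant structure: for any permutation $\sigma \in S_k$, permuting $a_i \mapsto a_{\sigma(i)}$ amounts to permuting the rows of the matrix in the numerator, picking up the sign $\mathrm{sgn}(\sigma)$; similarly, the Vandermonde in the denominator also picks up $\mathrm{sgn}(\sigma)$. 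The two signs cancel, so the ratio is invariant under permutations of $a_1,\ldots,a_k$.

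Combining these two observations, the marginal density of $\lambda^k$ is a product of two symmetric functions of $a_1,\ldots,a_k$, hence is itself symmetric in these parameters.

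There is really no serious obstacle here; the only subtle point is realizing that the definition \eqref{eq:a_harmonic_defn} was arranged precisely so that each of the two factors separately is symmetric, even though neither the raw determinant $\det[\exp\{a_i\lambda^k_j\}]$ nor the Vandermonde $\mathsf{V}(a_1,\ldots,a_k)$ alone is symmetric, being only antisymmetric. Note that the argument uses only the definition of $\varphi_k$ at level $k$ itself, so it is valid for each $k$ independently of the others, and does not require any consistency relation such as \eqref{eq:Gibbs_consistency}.
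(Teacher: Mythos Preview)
Your proof is correct and takes the same approach as the paper, which simply states that the lemma is an immediate consequence of \eqref{eq:a_harmonic_defn}. You have merely spelled out in detail what the paper leaves implicit, namely that the bialternant ratio $\det[\exp\{a_i\lambda^k_j\}]/\mathsf{V}(a_1,\ldots,a_k)$ is symmetric in the $a_i$'s because numerator and denominator are each antisymmetric.
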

\begin{proof}
	An immediate consequence of \eqref{eq:a_harmonic_defn}.
\end{proof}

\begin{proposition}
	\label{prop:uniquely}
	Any $\mathbf{a}$-Gibbs measure
	is uniquely determined by the 
	corresponding family of 
	$\mathbf{a}$-harmonic functions 
	$\left\{ \varphi_N \right\}_{N\in \mathbb{Z}_{\ge1}}$.
\end{proposition}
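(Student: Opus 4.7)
The plan is to show that the family $\{\varphi_N\}_{N\in\mathbb{Z}_{\ge1}}$ encodes every finite-dimensional marginal of the array, after which Kolmogorov extension yields uniqueness on the infinite interlacing array.

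First, I would fix $N\in\mathbb{Z}_{\ge1}$ and recover the joint density of $(\lambda^1,\ldots,\lambda^N)$ from $\varphi_N$ alone. The $\mathbf{a}$-Gibbs property pins down the conditional density of $(\lambda^1,\ldots,\lambda^{N-1})$ given $\lambda^N$ via formula \eqref{eq:a_Gibbs}; this factor is \emph{common} to every $\mathbf{a}$-Gibbs measure. It remains to recover the marginal density of $\lambda^N$. But the defining relation \eqref{eq:a_harmonic_defn}, read backwards, gives
\begin{equation*}
    \mathsf{Density}(\lambda^N)=\varphi_N(\lambda^N)\,
    \frac{\det[\exp\{a_i\lambda^N_j\}]_{i,j=1}^N}{\mathsf{V}(a_1,\ldots,a_N)},
\end{equation*}
which is well-defined because the $a_i$ are pairwise distinct and hence $\mathsf{V}(a_1,\ldots,a_N)\ne0$. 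Multiplying the two densities yields the joint density of $(\lambda^1,\ldots,\lambda^N)$ entirely in terms of $\varphi_N$.

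Next, I would note that the resulting family of finite-dimensional distributions is automatically consistent: the marginal of $(\lambda^1,\ldots,\lambda^{N-1})$ extracted from the level-$N$ joint density coincides with the one extracted from the level-$(N-1)$ joint density, precisely because \Cref{lemma:Gibbs_consistency} says $\varphi_{N-1}$ is obtained from $\varphi_N$ by integration against $e^{a_N(|\lambda^N|-|\lambda^{N-1}|)}$ over $\lambda^N\succ\lambda^{N-1}$. Thus the $\varphi_N$'s give a projective system of well-defined probability distributions on the truncated interlacing arrays $\{\lambda^k_j\}_{1\le j\le k\le N}$.

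Finally, I would invoke Kolmogorov's extension theorem on the product space $\prod_{k\ge1}\mathbb{R}^k$ (restricted to the closed subset of interlacing arrays) to conclude that there is a unique measure on the infinite array inducing these finite-dimensional distributions. Since any $\mathbf{a}$-Gibbs measure with the given harmonic functions induces exactly this system of marginals, uniqueness follows. There is no real obstacle here; the only mild point to be careful about is the limiting interpretation of \eqref{eq:a_Gibbs} when coordinates of $\lambda^N$ collide, but under our standing assumption that the $a_i$ are distinct this is handled by noting that the level-$N$ marginal puts zero mass on configurations with coincidences, so the Gibbs kernel is well-defined almost surely.
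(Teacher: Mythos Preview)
Your proposal is correct and follows the same route as the paper's proof, which is a one-line appeal to the Kolmogorov extension theorem. You have simply unpacked that appeal by spelling out how $\varphi_N$ together with \eqref{eq:a_harmonic_defn} and \eqref{eq:a_Gibbs} determines every finite-dimensional marginal, and then observed that these marginals fix the infinite-array measure uniquely.
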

\begin{proof}
	Follows from the Kolmogorov extension theorem.
\end{proof}

Let us emphasize that the 
results of this subsection (\Cref{lemma:Gibbs_consistency,prop:Gibbs_symmetry} and \Cref{prop:uniquely})
are valid not only for the perturbed GUE corners process 
(which, as we see next, is an example of an $\mathbf{a}$-Gibbs measure),
but hold in the full generality for any $\mathbf{a}$-Gibbs measure.

\subsection{Perturbed GUE corners as a Gibbs measure}

One readily sees that 
for the perturbed GUE corners process
we have the following harmonic functions:
\begin{equation}
	\label{eq:pertGUE}
	\varphi_N^{\mathrm{pertGUE}(\mathbf{a};t)}(\lambda^N)
	=\mathrm{const}\times
	\mathsf{V}(\lambda_1^N,\ldots,\lambda^N_N )
	\prod_{i=1}^{N}
	e^{-ta_i^2/2-
	(\lambda^{N}_i)^2/(2t)},\qquad 
	N=1,2,\ldots ,
\end{equation}
where the constant is the same as in 
\eqref{eq:pert_GUE_joint}
and does not depend on the $a_j$'s.
One readily checks that the $\mathbf{a}$-Gibbs property 
(\Cref{lemma:Gibbs_consistency})
for the perturbed GUE corners process
is equivalent to the well-known integral identity for the Vandermonde determinants:
\begin{equation}
	\label{eq:pertGUE_Gibbs_property}
	\begin{split}
	&\mathsf{V}(\lambda_1^{N-1},\ldots,\lambda^{N-1}_{N-1} )
	\prod_{i=1}^{N-1}
	e^{-
	(\lambda^{N-1}_i-a_N t)^2/(2t)}
	\\&\hspace{70pt}=
	\mathrm{const}\times
	\int_{\lambda^N\colon \lambda^N\succ \lambda^{N-1}}
	\mathsf{V}(\lambda_1^N,\ldots,\lambda^N_N )
	\prod_{i=1}^{N}
	e^{-
	(\lambda^{N}_i-a_N t)^2/(2t)}
	d\lambda^N.
	\end{split}
\end{equation}
where the constant does not depend on the $a_j$'s.
The shift by $a_Nt$ in the exponents in both sides
by changing the variables in the integral and renaming the $\lambda^{N-1}_i$'s,
can also be removed (or replaced with any other shift $bt$)
since the Vandermonde is translation invariant.

In particular, \eqref{eq:pert_GUE_joint}
together with \Cref{prop:Gibbs_symmetry}
implies the symmetry (as in this lemma)
of the perturbed GUE corners distribution with respect to the parameters $a_i$.

\section{Swap operators via exponential jumps}
\label{sec:swap_operators_thm44}

In this section we explore the Gibbs
property and prove \Cref{thm:main_swap}
on Markov swap operators.

\subsection{Confined exponential distribution}

Let $c<d$ and $\alpha$ be real numbers.
Let us call a random variable
on $(c,d)$ with probability density 
\begin{equation*}
	\frac{\alpha}{e^{d\alpha}-e^{c\alpha}}\,e^{\alpha x},\qquad x\in(c,d),
\end{equation*}
an \emph{exponential random variable confined
to the segment $(c,d)$}, notation $E_\alpha(c,d)$.
Note that this definition makes sense
regardless of the sign of $\alpha$ (in contrast with the case when the 
interval $(c,d)$ is half-infinite).
If $\alpha=0$, then $E_0(c,d)$ is
simply the uniform random variable on $(c,d)$.

\subsection{Elementary Markov swap operator}

The next observation plays a key role:
\begin{proposition}
	\label{prop:elementary_swap}
	Take real numbers $c<d$ and $\alpha>0$. 
	Let $X$ be distributed as $E_\alpha(c,d)$,
	and $\mathscr{E}_\alpha\in(0,+\infty)$ be an independent usual
	exponential random variable with parameter $\alpha$
	(i.e., with density $\alpha e^{-\alpha y}$, $y>0$). Then
	the random variable
	\begin{equation}
		\label{eq:Y_as_function_of_X_elementary_swap}
		Y:=c+\mathcal{E}_\alpha\wedge(X-c)
	\end{equation}
	is distributed as $E_{-\alpha}(c,d)$.
\end{proposition}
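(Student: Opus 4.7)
The statement is a distributional identity about one-dimensional random variables, so I would prove it by directly computing the tail function $\mathrm{Prob}(Y>y)$ for $y\in(c,d)$ and matching it with the tail of $E_{-\alpha}(c,d)$. Since $Y=c+\mathscr{E}_\alpha\wedge(X-c)$, the event $\{Y>y\}$ for $y\in(c,d)$ is exactly $\{\mathscr{E}_\alpha>y-c\}\cap\{X>y\}$, and by independence this factorises as $e^{-\alpha(y-c)}\,\mathrm{Prob}(X>y)$.

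\textbf{Main steps.} First, integrate the density of $X\sim E_\alpha(c,d)$ to obtain
\begin{equation*}
\mathrm{Prob}(X>y)=\int_y^d\frac{\alpha\,e^{\alpha x}}{e^{d\alpha}-e^{c\alpha}}\,dx=\frac{e^{d\alpha}-e^{y\alpha}}{e^{d\alpha}-e^{c\alpha}}.
\end{equation*}
Multiplying by $e^{-\alpha(y-c)}$ and cancelling a common factor of $e^{\alpha c}$ in numerator and denominator gives
\begin{equation*}
\mathrm{Prob}(Y>y)=\frac{e^{\alpha(d-y)}-1}{e^{\alpha(d-c)}-1}.
\end{equation*}
Second, compute the tail of $E_{-\alpha}(c,d)$ directly from its density and rewrite it (multiplying numerator and denominator by $e^{\alpha d}$) in the same form. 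The two expressions agree on $(c,d)$ and both distributions are supported there, so the identity in law follows.

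\textbf{Remark on structure.} After the shift $U:=X-c$, $L:=d-c$, the claim is equivalent to the clean symmetry $\mathscr{E}_\alpha\wedge U\stackrel{d}{=}L-U$, i.e.\ the Markov operation on the interior of the interval has the same effect on the distribution as the deterministic reflection across the midpoint. This reformulation also makes transparent why the operator reverses the sign of the parameter: reflecting the density $\propto e^{\alpha u}$ on $(0,L)$ yields $\propto e^{-\alpha u}$. This is the piece of structure that will later be spliced, level by level, into the $\mathbf{a}$-Gibbs conditional distribution \eqref{eq:a_Gibbs} to prove \Cref{thm:main_swap}.

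\textbf{Obstacle.} There is no substantive obstacle at this step; the computation is elementary and the only minor care is the algebraic simplification showing that the prefactor $e^{-\alpha(y-c)}$ precisely converts the $E_\alpha$ tail into the $E_{-\alpha}$ tail. Everything remains valid at $\alpha=0$ only in the sense of a limit, but the hypothesis $\alpha>0$ in the statement sidesteps that point.
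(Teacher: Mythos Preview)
Your proof is correct and takes a genuinely different route from the paper's. The paper conditions on $X=x$, writes down the transition kernel of $Y$ given $X$ (an atom at $y=x$ of mass $e^{-\alpha(x-c)}$ plus an absolutely continuous part $\alpha e^{-\alpha(y-c)}\,\mathsf{d}y$ on $(c,x)$), and then integrates this kernel against the density of $X$ to recover the density of $Y$. You instead compute the survival function directly via the clean event identity $\{Y>y\}=\{\mathscr{E}_\alpha>y-c\}\cap\{X>y\}$ and factor by independence. Your approach is shorter and avoids any bookkeeping with the atom; the paper's approach has the compensating advantage of making the Markov transition kernel explicit, which is conceptually natural since the whole point of $S^\alpha$ is to be used as a Markov operator in \Cref{thm:action_of_level_swap_on_Gibbs}. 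Your reflection reformulation $\mathscr{E}_\alpha\wedge U\stackrel{d}{=}L-U$ is a pleasant structural observation that does not appear in the paper.
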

\begin{proof}
	We have for the conditional distribution of $Y$ given
	$X=x$:
	\begin{equation}
		\label{eq:density_swap}
		\mathrm{Prob}\left( Y\in [y,y+\mathsf{d}y]\mid X=x \right)
		=
		\mathbf{1}_{x=y}e^{-\alpha(y-c)}+\alpha e^{-\alpha(y-c)}\mathsf{d}y,
		\qquad c\le y\le x.
	\end{equation}
	The distribution of $Y$ has an atom at $y=x$ 
	(coming from the event $\mathscr{E}_\alpha>X-c$ in
	\eqref{eq:Y_as_function_of_X_elementary_swap})
	and an absolutely continuous
	part on $(0,x)$.
	The overall density of $Y$ in the variable $y$
	is obtained from the following
	integral:
	\begin{equation*}
		\begin{split}
			&\int_{y}^{d}
			\frac{\alpha}{e^{d\alpha}-e^{c\alpha}}\,e^{\alpha x}
			\,\mathrm{Prob}\left( Y\in [y,y+\mathsf{d}y]\mid X=x \right)\mathsf{d}x
			\\&\hspace{40pt}=
			\frac{\alpha}{e^{d\alpha}-e^{c\alpha}}\,e^{\alpha y}
			e^{-\alpha(y-c)}+
			\frac{\alpha}{e^{d\alpha}-e^{c\alpha}}\,
			\alpha e^{-\alpha(y-c)}
			\int_{y}^{d}
			e^{\alpha x}\mathsf{d}x
			\\&\hspace{40pt}=
			\frac{-\alpha}{e^{-d\alpha}-e^{-c\alpha}}\,e^{-\alpha y},
		\end{split}
	\end{equation*}
	which completes the proof.
\end{proof}

We will view the operation of passing from $X$ to $Y$
as in \eqref{eq:Y_as_function_of_X_elementary_swap}
as a one-step Markov transition operator. One can think that the 
``particle'' $X\in (c,d)$
jumps left into the new location $Y$,
by means of the new exponential random variable $\mathscr{E}_\alpha$.
Note that the new location $Y$ depends only on $X$ and not on the 
right endpoint $d$ of the interval. 
We call this Markov transition operator 
the \emph{elementary swap operator} and denote it by $S^\alpha$.
This operator acts on 
distributions (in our case, densities) as $\mathsf{Density}_Y=\mathsf{Density}_X \, S^\alpha$.

The swap operator $S^\alpha$ is analogous to the 
jump operator $L_\alpha$ in the discrete situation
considered in \cite[Section 4]{PetrovSaenz2019backTASEP}.
Let us make a number of remarks.

\begin{remark}
	\label{rmk:elementary_swap_operators_remarks}
	\begin{enumerate}		
		\item 
			When $\alpha=0$, the swap operator $S^\alpha$ 
			should be understood as the identity map, which is evident from
			\eqref{eq:density_swap}.
		\item 
			For $\alpha=-\beta<0$, algebraic manipulations in the proof of 
			\Cref{prop:elementary_swap} make sense,
			but the new random variable $Y$ obtained by applying $S^{-\beta}$ to 
			$X\sim E_{-\beta}(c,d)$
			does not admit a probabilistic interpretation as in 
			\eqref{eq:Y_as_function_of_X_elementary_swap}.
		\item 
			Instead of applying $S^{-\beta}$ to $E_{-\beta}(c,d)$, 
			let us consider the operator which 
			moves $X$ to the right symmetrically to how $S^\alpha$ moves the 
			``particle'' $X$
			to the left.
			That is, this new operator acts as
			$Y'=d-\mathscr{E}_\beta\wedge(d-X)$, where $\mathscr{E}_\beta$ is
			an independent exponential random variable.
			One can show similarly to \Cref{prop:elementary_swap}
			that if $X\sim E_{-\beta}(c,d)$, then $Y'\sim E_{\beta}(c,d)$.
			All our results for Markov operators built from the left jumps
			$S^\alpha$
			have straightforward analogues for these right jumping operators,
			and so we will only focus on the left jumps in the paper.
	\end{enumerate}
\end{remark}

\subsection{Swap operator for Gibbs measures}

Let us fix 
a perturbation sequence $\mathbf{a}$,
and let $\{\lambda^m_j \}_{1\le j\le m<\infty}$
be a random interlacing array distributed
according to some $\mathbf{a}$-Gibbs measure
(for example, the 
perturbed GUE corners process 
with an arbitrary time parameter $t\ge0$).

Next, fix a level $k\in \mathbb{Z}_{\ge1}$, and
consider the conditional distribution of $\lambda^k$ given 
the two adjacent levels $\lambda^{k-1}$, $\lambda^{k+1}$
(if $k=1$, the conditioning is only on $\lambda^2$).
From \eqref{eq:a_Gibbs} one readily sees that 
this conditional distribution takes the form
\begin{equation}
	\label{eq:conditional_local_distribution}
	\mathsf{Density}(\lambda^k\mid \lambda^{k-1},\lambda^{k+1})=
	\mathrm{const}\times
	\exp\left\{ \alpha(\lambda^k_1+\ldots+\lambda^k_k ) \right\}
	\mathbf{1}_{\lambda^{k-1}\prec\lambda^k\prec \lambda^{k+1}},
\end{equation}
where we have denoted $\alpha:=a_k-a_{k+1}$.
Equivalently, we can describe distribution 
\eqref{eq:conditional_local_distribution} as follows. 
\begin{proposition}
	\label{prop:independent_exponentials}
	The conditional distribution of $\lambda^k$ 
	given $\lambda^{k-1}$ and $\lambda^{k+1}$ is such that
	each $\lambda^k_i$, $i=1,\ldots,k $, is an independent
	random variable distributed as 
	\begin{equation}
		\label{eq:interval_for_lambda_k_i}
		E_\alpha
		\bigl(\lambda^{k+1}_{i+1}\vee \lambda^{k-1}_{i}
		,
		\lambda^{k+1}_i\wedge \lambda^{k-1}_{i-1}
		\bigr),
	\end{equation}
	where $\alpha=a_k-a_{k+1}$.
	(For $i=k$ we set $\lambda^{k-1}_k=-\infty$, 
	and for $i=1$ we set $\lambda^{k-1}_0=+\infty$,
	but
	both ends of the interval in \eqref{eq:interval_for_lambda_k_i} are
	always finite.)
\end{proposition}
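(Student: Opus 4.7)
The plan is to show that formula \eqref{eq:conditional_local_distribution} already exhibits the claimed product structure once one carefully unpacks the interlacing indicator. I would start from
\[
\mathsf{Density}(\lambda^k\mid \lambda^{k-1},\lambda^{k+1})
=\mathrm{const}\times \exp\Bigl\{\alpha\sum_{i=1}^{k}\lambda^k_i\Bigr\}\,
\mathbf{1}_{\lambda^{k-1}\prec\lambda^k\prec\lambda^{k+1}},
\]
and note that the exponential factor trivially splits as $\prod_{i=1}^{k} e^{\alpha \lambda^k_i}$.

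The next step, which is the real content of the proposition, is to observe that the interlacing indicator also factorizes \emph{coordinate-wise}. Writing out the chain of inequalities in \eqref{eq:interlace} between $\lambda^{k-1}$ and $\lambda^k$, and between $\lambda^k$ and $\lambda^{k+1}$, one sees that the only constraints involving $\lambda^k_i$ are
\[
\lambda^{k+1}_{i+1}\le \lambda^k_i\le \lambda^{k+1}_i,\qquad
\lambda^{k-1}_{i}\le \lambda^k_i\le \lambda^{k-1}_{i-1},
\]
i.e., $\lambda^k_i\in\bigl(\lambda^{k+1}_{i+1}\vee\lambda^{k-1}_i,\ \lambda^{k+1}_i\wedge\lambda^{k-1}_{i-1}\bigr)$, and no constraint couples $\lambda^k_i$ with any $\lambda^k_j$ for $j\ne i$. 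Consequently the full indicator equals the product $\prod_{i=1}^k \mathbf{1}_{\lambda^k_i\in I_i}$ where $I_i$ is the interval in \eqref{eq:interval_for_lambda_k_i}.

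Combining these two factorizations, the joint conditional density becomes a product of $k$ one-dimensional densities, each proportional to $e^{\alpha x}\mathbf{1}_{x\in I_i}$, which is exactly the definition of the confined exponential distribution $E_\alpha(I_i)$. By the product form of the density, the coordinates $\lambda^k_1,\ldots,\lambda^k_k$ are conditionally independent.

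The only points requiring a small comment are the boundary indices. At $i=k$ the convention $\lambda^{k-1}_k=-\infty$ reduces the lower endpoint of $I_k$ to $\lambda^{k+1}_{k+1}$, and at $i=1$ the convention $\lambda^{k-1}_0=+\infty$ reduces the upper endpoint of $I_1$ to $\lambda^{k+1}_1$; in both cases the surviving endpoint is a real entry of $\lambda^{k+1}$, so $I_i$ is a bounded interval and $E_\alpha(I_i)$ is well defined for any sign of $\alpha$. There is no serious obstacle here: the argument is essentially algebraic bookkeeping, with the only subtlety being the correct reading of the interlacing inequalities coordinate-by-coordinate.
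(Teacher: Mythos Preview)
Your argument is correct and is precisely the elaboration of the paper's one-line proof, which simply says ``Readily follows from \eqref{eq:conditional_local_distribution}.'' You have spelled out what the paper leaves implicit: the factorization of both the exponential and the interlacing indicator over the coordinates $\lambda^k_i$.
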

\begin{proof}
	Readily follows from \eqref{eq:conditional_local_distribution}.
\end{proof}

Assume that $\alpha=a_k-a_{k+1}>0$,
and take an array $\{\lambda^{m}_j\}_{1\le j\le m<\infty}$
as above.
Let us define a new random interlacing array
$\{\nu^m_j\}_{1\le j\le m<+\infty}$ for which
$\nu^m_j=\lambda^m_j$ for all $m\ne k$, $j=1,\ldots,m $,
and such that 
\begin{equation}
	\label{eq:swap_operator_at_a_single_level}
	\nu^k_i:=
	\lambda^{k+1}_{i+1}\vee \lambda^{k-1}_{i}
	+
	\mathscr{E}_\alpha^i\wedge
	\bigl(
		\lambda^k_i-
		\lambda^{k+1}_{i+1}\vee \lambda^{k-1}_{i}
	\bigr)
	,\qquad 
	i=1,\ldots,k,
\end{equation}
where $\mathscr{E}^1_\alpha,\ldots,\mathscr{E}^k_\alpha $
are independent usual exponential random variables 
with parameter $\alpha$.
Note that almost surely we have
$\nu^k_i\le \lambda^k_i$, $i=1,\ldots,k $.

In other words, in \eqref{eq:swap_operator_at_a_single_level}
we independently apply the elementary swap operator $S^\alpha$
to each $\lambda^k_i$
which is confined to the corresponding
interval as in \Cref{prop:independent_exponentials}.
Denote this combination of the 
swap operators applied at level $k$ by 
$\mathscr{S}^\alpha_k$.
As in \Cref{rmk:elementary_swap_operators_remarks}, 
the Markov operator 
$\mathscr{S}^\alpha_k$ makes sense only for $\alpha>0$.

Let $\tau_k$ denote the elementary transposition $(k,k+1)$.
For a perturbation sequence $\mathbf{a}$, let 
$\tau_k\mathbf{a}=(a_1,\ldots,a_{k-1},a_{k+1},a_k,\ldots )$ be the 
permuted sequence.

\begin{theorem}[\Cref{thm:main_swap} in Introduction]
	\label{thm:action_of_level_swap_on_Gibbs}
	Take an $\mathbf{a}$-Gibbs measure
	for which each harmonic function $\varphi_N$
	depends on the parameters $a_1,\ldots,a_N $ in a symmetric way.
	If $a_k>a_{k+1}$, then the action of the Markov operator
	$\mathscr{S}^\alpha_{k}$ (with $\alpha=a_k-a_{k+1}$)
	on this $\mathbf{a}$-Gibbs measure
	results in a $\tau_k\mathbf{a}$-Gibbs measure
	which corresponds to harmonic functions modified as follows:
	\begin{equation}
		\label{eq:modification_under_harmonic_functions}
		\begin{split}
			\varphi_j'&=\varphi_j,\qquad j\ne k;\\
			\varphi_k'(\lambda^k)&=
			\int_{\lambda^{k+1}\colon \lambda^{k+1}\succ \lambda^k}
			\varphi_{k+1}(\lambda^{k+1})\,
			e^{a_k(|\lambda^{k+1}|-|\lambda^k|)}d\lambda^{k+1}.
		\end{split}
	\end{equation}
\end{theorem}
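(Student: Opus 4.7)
My plan is to compute the joint density of the new array $(\nu^1,\ldots,\nu^N)$ (for $N>k+1$) directly, verify that it has the $\tau_k\mathbf{a}$-Gibbs form \eqref{eq:a_Gibbs}, and then use \Cref{prop:uniquely} to conclude, identifying the harmonic functions along the way. The local input is immediate: \Cref{prop:independent_exponentials} gives that conditionally on $\lambda^{k-1},\lambda^{k+1}$ the entries $\lambda^k_i$ are independent $E_\alpha(c_i,d_i)$ with $\alpha=a_k-a_{k+1}$ and $c_i=\lambda^{k+1}_{i+1}\vee\lambda^{k-1}_i$, $d_i=\lambda^{k+1}_i\wedge\lambda^{k-1}_{i-1}$, and \Cref{prop:elementary_swap} coordinatewise sends these to independent $E_{-\alpha}(c_i,d_i)$ --- exactly the conditional shape at level $k$ demanded by the $\tau_k\mathbf{a}$-Gibbs property, since $-\alpha=(\tau_k\mathbf{a})_k-(\tau_k\mathbf{a})_{k+1}$.

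To lift this to a joint-density identity I would need two computational ingredients. The first is a kernel identity extracted from the proof of \Cref{prop:elementary_swap}: a short manipulation of the $E_\alpha$ and $E_{-\alpha}$ densities yields
\begin{equation*}
\int_c^d e^{\alpha x}\,Q(y\mid x)\,dx=e^{\alpha(c+d)}\,e^{-\alpha y},\qquad y\in(c,d),
\end{equation*}
where $Q$ is the transition kernel of $S^\alpha$. The second is the telescoping
\begin{equation*}
\sum_{i=1}^{k}(c_i+d_i)=|\lambda^{k-1}|+|\lambda^{k+1}|,
\end{equation*}
which I would establish by pairing $c_{i-1}$ with $d_i$ so that each max/min pair of $\{\lambda^{k+1}_i,\lambda^{k-1}_{i-1}\}$ collapses to the plain sum $\lambda^{k+1}_i+\lambda^{k-1}_{i-1}$; the boundary terms $c_k=\lambda^{k+1}_{k+1}$ and $d_1=\lambda^{k+1}_1$ arise from the conventions $\lambda^{k-1}_0=+\infty$, $\lambda^{k-1}_k=-\infty$.

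Writing the $\mathbf{a}$-Gibbs joint density as $\varphi_N(\lambda^N)\,e^{a_N|\lambda^N|}\prod_{m=1}^{N-1}e^{(a_m-a_{m+1})|\lambda^m|}\mathbf{1}_{\mathrm{interlace}}$, I would multiply by $\prod_{i=1}^k Q_i(\nu^k_i\mid\lambda^k_i)$ and integrate out $\lambda^k$. The kernel identity replaces the factor $e^{\alpha|\lambda^k|}$ by $e^{\alpha(|\nu^{k-1}|+|\nu^{k+1}|)}e^{-\alpha|\nu^k|}$; a short coefficient bookkeeping then shows the resulting exponent is exactly $a_N|\nu^N|+\sum_{m=1}^{N-1}(a'_m-a'_{m+1})|\nu^m|$ with $\mathbf{a}'=\tau_k\mathbf{a}$ (the $+\alpha$ corrections at levels $k\pm1$ combine with the original coefficients $a_{k-1}-a_k$ and $a_{k+1}-a_{k+2}$ to yield the swapped differences $a_{k-1}-a_{k+1}$ and $a_k-a_{k+2}$, while the level-$k$ coefficient becomes $-\alpha=a'_k-a'_{k+1}$; at $N=k+1$ the boundary coefficient $a_{k+1}+\alpha=a_k$ matches $a'_N$). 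Hence $P_{\mathrm{new}}$ is in the $\tau_k\mathbf{a}$-Gibbs form with top-level function $\varphi_N$. Because the prefactor $\mathsf{V}(\mathbf{a}')/\det[e^{a'_i\nu^N_j}]$ in \eqref{eq:a_harmonic_defn} is invariant under a single transposition (both numerator and denominator change sign) and the marginal of $\nu^N=\lambda^N$ is unchanged, this gives $\varphi'_N=\varphi_N$ for $N>k$; for $N<k$ the same equality holds trivially since $\mathbf{a}$ and $\mathbf{a}'$ agree on the first $N$ entries. The formula for $\varphi'_k$ in \eqref{eq:modification_under_harmonic_functions} is then exactly the Gibbs consistency relation \eqref{eq:Gibbs_consistency} applied to the new measure at level $k+1$ with $a'_{k+1}=a_k$; the symmetry hypothesis on $\varphi_{k+1}$ is what makes iterating consistency once more consistent with $\varphi'_{k-1}=\varphi_{k-1}$, since the resulting double integral with the swapped order of $a_k,a_{k+1}$ produces a $\varphi_{k-1}$ that, by symmetry, coincides with the original.

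The main obstacle I expect is the telescoping $\sum(c_i+d_i)=|\lambda^{k-1}|+|\lambda^{k+1}|$: it is this precise combinatorial matching of min/max pairs under interlacing that lets the extra exponential factor produced by the kernel integration fuse with the preexisting Gibbs exponent and reproduce the swapped Gibbs weights without any residual terms. Everything else is bookkeeping of coefficients and a single application of \Cref{lemma:Gibbs_consistency}.
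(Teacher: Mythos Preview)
Your proposal is correct and uses essentially the same ingredients as the paper's proof: \Cref{prop:elementary_swap} to flip $E_\alpha$ to $E_{-\alpha}$ on each interval, and the telescoping $\sum_i(c_i+d_i)=|\lambda^{k-1}|+|\lambda^{k+1}|$ to reconcile the resulting exponential factor with the Gibbs weights. The paper organizes the argument slightly differently---it factors $\mathrm{Prob}(\lambda^1,\ldots,\lambda^k\mid\lambda^{k+1})$ as $\mathrm{Prob}(\lambda^1,\ldots,\lambda^{k-1}\mid\lambda^{k+1})\cdot\mathrm{Prob}(\lambda^k\mid\lambda^{k-1},\lambda^{k+1})$ and checks the first factor is symmetric in $a_k,a_{k+1}$ using the same telescoping, while the second factor transforms by \Cref{prop:elementary_swap}---but the content is the same, and your direct joint-density computation together with the explicit appeal to \Cref{lemma:Gibbs_consistency} for the formula of $\varphi_k'$ is arguably cleaner.
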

\begin{proof}
	Since the action of $\mathscr{S}^\alpha_k$ does not 
	change levels $j\ne k$ (and hence distributions
	of these levels), we clearly have $\varphi_j'=\varphi_j$ for $j\ne k$.

	Thus, it remains to show that under $\mathscr{S}^\alpha_k$ the 
	$\mathbf{a}$-Gibbs property becomes $\tau_k\mathbf{a}$-Gibbs.
	This can be seen by representing the 
	conditional distributions as 
	\begin{equation}
		\label{eq:modification_under_harmonic_functions_proof}
		\mathrm{Prob}(\lambda^1,\ldots,\lambda^k\mid \lambda^{k+1} )=
		\mathrm{Prob}(\lambda^1,\ldots,\lambda^{k-1}\mid \lambda^{k+1} )
		\cdot
		\mathrm{Prob}(\lambda^{k}\mid \lambda^{k-1},\lambda^{k+1}).
	\end{equation}
	Due to 
	\eqref{eq:a_Gibbs},
	the left-hand side depends on $a_1,\ldots,a_{k+1}$ in the following way:
	\begin{equation}
		\label{eq:new}
		f(a_1,\ldots,a_{k+1})
		\exp
		\left\{ a_{k+1}(|\lambda^{k+1}|-|\lambda^k|)+
		a_k(|\lambda^{k}|-|\lambda^{k-1}|) 
		+\ldots 
		\right\}
	\end{equation}
	where $f$ is symmetric in $a_1,\ldots,a_{k+1}$.
	One can readily check that 
	$\mathrm{Prob}(\lambda^1,\ldots,\lambda^{k-1}\mid \lambda^{k+1} )$
	depends on the parameters $a_k,a_{k+1}$ in a symmetric way, too.
	Indeed, this conditional distribution
	corresponds to integrating 
	\eqref{eq:a_Gibbs} (with $N=k+1$) over $\lambda^k$.
	The non-exponential prefactor in \eqref{eq:a_Gibbs} depending on $\lambda^{k+1}$
	is already symmetric, and for the exponential part we have
	\begin{equation}
		\label{eq:modification_under_harmonic_functions_proof1}
		\begin{split}
			&e^{a_{k+1}|\lambda^{k+1}|+\sum_{j=1}^{k-1}|\lambda^j|(a_j-a_{j+1})}
			\int 
			e^{|\lambda^k|(a_k-a_{k+1})}
			d\lambda^k
			\\&\hspace{40pt}=
			e^{a_{k+1}|\lambda^{k+1}|+\sum_{j=1}^{k-1}|\lambda^j|(a_j-a_{j+1})}
			\prod_{i=1}^{k}
			\frac{e^{\alpha(\lambda^{k+1}_{i+1}\vee \lambda^{k-1}_{i})}
			-
			e^{\alpha(\lambda^{k+1}_i\wedge\lambda^{k-1}_{i-1})}}{\alpha},
		\end{split}
	\end{equation}
	where we used the normalizing constant 
	for the confined exponential distribution, and $\alpha=a_k-a_{k+1}$.
	Swapping the parameters as $a_k\leftrightarrow a_{k+1}$
	brings 
	$\exp\left\{ -\alpha(|\lambda^{k+1}|+|\lambda^{k-1}|) \right\}$
	from the exponential factor in front of the product in
	\eqref{eq:modification_under_harmonic_functions_proof1}.
	This factor compensates
	the product of the expressions
	$\exp\left\{ \alpha\bigl(\lambda^{k+1}_{i+1}\vee \lambda^{k-1}_{i}+
	\lambda^{k+1}_i\wedge\lambda^{k-1}_{i-1}\bigr) \right\}$
	over $i=1,\ldots,k $, coming out of the product in 
	\eqref{eq:modification_under_harmonic_functions_proof1} 
	after the same swap.
	Thus, \eqref{eq:modification_under_harmonic_functions_proof1} is
	symmetric under $a_k\leftrightarrow a_{k+1}$.

	The action of $\mathscr{S}^\alpha_k$ affects only 
	the part 
	$\mathrm{Prob}(\lambda^{k}\mid \lambda^{k-1},\lambda^{k+1})$
	in the right-hand side of \eqref{eq:modification_under_harmonic_functions_proof}. 
	Before the action of $\mathscr{S}^\alpha_k$,
	each $\lambda^k_i$
	was distributed as $E_\alpha$ on the corresponding interval
	(see \Cref{prop:independent_exponentials}).
	By \Cref{prop:elementary_swap}, after the action of $\mathscr{S}^\alpha_k$,
	these random variables turn into the
	$E_{-\alpha}$'s, which corresponds to the $\tau_k\mathbf{a}$-Gibbs
	structure, see \eqref{eq:new}. Combining this with the symmetries in 
	\eqref{eq:modification_under_harmonic_functions_proof} described above
	and using \Cref{prop:Gibbs_symmetry} and \Cref{prop:uniquely},
	we arrive at the claim.
\end{proof}

In particular, for $a_k>a_{k+1}$,
the perturbed GUE corners process
coming from the random matrix
$H=t^{1/2}\cdot G+t\cdot \mathrm{diag}(\mathbf{a})$
(cf. \Cref{sub:matrix_model}),
after the application of 
$\mathscr{S}^{a_k-a_{k+1}}_k$,
turns into the 
corners process
for the random matrix
$\mathscr{T}_k H \mathscr{T}_k \stackrel{d}{=} t^{1/2}\cdot G+t\cdot \mathrm{diag}(\tau_k\mathbf{a})$:
\begin{equation}
	\label{eq:tau_k_action_in_term_of_RM}
	H\xdashrightarrow{\mathscr{S}^{a_k-a_{k+1}}_k} \mathscr{T}_k H\mathscr{T}_k,
\end{equation}
where $\mathscr{T}_k$ is the permutation matrix
of $\tau_k=(k,k+1)$.
In other words, applying the exponential jumps 
$\mathscr{S}^{a_k-a_{k+1}}_k$ 
on the level of eigenvalues
is
equivalent in distribution
to the change of basis $\mathrm{e}_k\leftrightarrow \mathrm{e}_{k+1}$
in the space corresponding to the random matrix.

\section{Global shift and reflected Brownian motions}
\label{sec:proofs_5}

In this section we consider a 
special case when the perturbation sequence is an arithmetic progression, 
and prove \Cref{thm:main_theorem_shift,thm:main_theorem_Brownian}.

Set
\begin{equation}
	\label{eq:a_Gibbs_measures}
	a_j=-(j-1)\alpha,\qquad  j=1,2,\ldots , 
\end{equation}
where $\alpha>0$ is fixed. 
Denote the corresponding random matrix by 
\begin{equation}
	\label{eq:H_alpha}
	H^\alpha=t^{1/2}\cdot G+t\cdot \mathrm{diag}(0,-\alpha,-2\alpha,\ldots ),
\end{equation}
and 
its corners distribution on 
infinite interlacing arrays by $\mathbf{M}^{\alpha}$.
To $\mathbf{M}^\alpha$
we will apply the sequence of swap operators
$\mathscr{S}^{k\alpha}_k$, 
first with $k=1$, then with $k=2$, and so on.
Denote the resulting Markov operator which 
acts on the infinite interlacing array by 
$\mathbb{S}^\alpha$.

\begin{lemma}
	\label{lemma:big_shift_is_well_defined}
	The Markov transition operator $\mathbb{S}^{\alpha}$ is well-defined.
\end{lemma}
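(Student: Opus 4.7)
The plan is to exploit the fact that the elementary swap $\mathscr{S}^{k\alpha}_k$ only touches level $k$ of the array, so that although $\mathbb{S}^\alpha$ is a formally infinite composition, each coordinate of the output is in fact determined after finitely many steps. The well-definedness will then follow from a straightforward measurable construction on a product probability space, with no actual limit to take.

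Concretely, I would work on an auxiliary probability space carrying a family of mutually independent exponential vectors
$\mathscr{E}^{(k)}=(\mathscr{E}^{k,1}_{k\alpha},\ldots,\mathscr{E}^{k,k}_{k\alpha})$, one for each $k\in\mathbb{Z}_{\ge1}$, independent of the input array $\{\lambda^m_j\}$. Starting from $\lambda$, define recursively a new array $\{\mu^m_j\}$ by setting $\mu^m=\lambda^m$ for $m$ not yet reached, and putting
\begin{equation*}
\mu^k_i := \lambda^{k+1}_{i+1}\vee \mu^{k-1}_{i} + \mathscr{E}^{k,i}_{k\alpha}\wedge\bigl(\lambda^k_i-\lambda^{k+1}_{i+1}\vee \mu^{k-1}_{i}\bigr),\qquad i=1,\ldots,k,
\end{equation*}
for $k=1,2,\ldots$ in order (with the conventions $\mu^{k-1}_k=-\infty$ and $\mu^0=\emptyset$). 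Since the swap at level $k$ uses only the values at levels $k-1,k,k+1$ and only writes into level $k$, the coordinate $\mu^k$ is a measurable function of $\lambda^k$, $\lambda^{k+1}$, $\mu^{k-1}$, and $\mathscr{E}^{(k)}$. By induction on $k$ it is therefore a measurable function of $\lambda^1,\ldots,\lambda^{k+1}$ and of $\mathscr{E}^{(1)},\ldots,\mathscr{E}^{(k)}$, i.e.\ of finitely many of the underlying random variables. In particular, no infinite composition needs to be resolved: the joint law of $\{\mu^m_j\}_{1\le j\le m<\infty}$ on $\mathbb{R}^{\mathbb{Z}_{\ge1}\times\mathbb{Z}_{\ge1}}$ is a legitimate pushforward measure and so $\mathbb{S}^\alpha$ is a well-defined Markov kernel from interlacing arrays to interlacing arrays.

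The remaining point is that the output actually lies in the set of interlacing arrays, which I would verify by induction on $k$. Assume that after applying swaps at levels $1,\ldots,k-1$ the current state (with level $k-1$ equal to $\mu^{k-1}$ and levels $\ge k$ still equal to $\lambda$) is interlacing; this is automatic for the base case from the hypothesis that $\lambda$ is interlacing and from the analogous step at level $k-2$. Then the interval $[\lambda^{k+1}_{i+1}\vee \mu^{k-1}_i,\lambda^k_i]$ in the defining formula for $\mu^k_i$ is nonempty (because $\lambda^k_i\ge \mu^{k-1}_i$ from interlacing of the current state, and $\lambda^k_i\ge \lambda^{k+1}_{i+1}$ from interlacing of the input), and the constraints $\mu^{k-1}_i\le \mu^k_i\le \mu^{k-1}_{i-1}$ and $\lambda^{k+1}_{i+1}\le \mu^k_i\le \lambda^{k+1}_i$ hold by inspection of the formula together with the bound $\mu^k_i\le \lambda^k_i$.

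I do not expect any real obstacle here: the only thing one could worry about is whether the infinite sequential application requires a Kolmogorov-extension-type argument, but the key structural observation (each coordinate stabilizes after finitely many swaps) rules this out entirely and reduces the statement to a routine measurability check.
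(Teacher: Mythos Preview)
Your proposal is correct and follows essentially the same approach as the paper: both argue that the infinite composition is well-defined because the output at each level $k$ is determined inductively from $\lambda^k$, $\lambda^{k+1}$, the already-computed $\nu^{k-1}$, and fresh independent exponentials, so no genuine limit is involved. The paper's proof is a three-sentence version of your argument; you have simply fleshed out the measurability and added the interlacing verification that the paper leaves implicit.
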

\begin{proof}
	Let $\{\lambda^m_j\}_{1\le j\le m<\infty}$
	be the random interlacing
	array to which we apply $\mathbb{S}^{\alpha}$.
	The resulting random interlacing array 
	$\{\nu^m_j\}_{1\le j\le m<\infty}$
	is defined inductively: for each
	$k$, the 
	$k$-th level
	configuration $\nu^k$ 
	is the result of the action of $\mathscr{S}^{k\alpha}_k$
	on $\lambda^k$ given $\nu^{k-1}$ and $\lambda^{k+1}$.
	For this action, the configuration $\nu^{k-1}$ 
	was defined on the previous step of the induction.
	This implies that $\mathbb{S}^{\alpha}$ is well-defined.
\end{proof}

Acting on $\mathbf{a}$-Gibbs measures with
$\mathbf{a}=(0,-\alpha,-2\alpha,\ldots )$
\eqref{eq:a_Gibbs_measures},
$\mathscr{S}^\alpha_1$ interchanges $0$ with $-\alpha$,
then 
$\mathscr{S}^{2\alpha}_2$ interchanges $0$ (which is now the new $a_2$)
with $-2\alpha$, and so on. 
After infinitely many swaps, the parameter $0$ disappears,
and one expects that the resulting distribution would be 
$\mathbf{a}$-Gibbs with $\mathbf{a}=(-\alpha,-2\alpha,-3\alpha,\ldots )$.
For the special choice of the 
perturbed GUE corners process \eqref{eq:H_alpha}, 
the action of $\mathbb{S}^\alpha$ is, moreover, equivalent in distribution
to a global shift. We establish the following result:

\begin{theorem}[\Cref{thm:main_theorem_shift} in Introduction]
	\label{thm:shift_for_alpha_GUE}
	The action of $\mathbb{S}^{\alpha}$ on $\mathbf{M}^\alpha$ is 
	equivalent in distribution to a deterministic shift of the 
	whole infinite interlacing array to the left by $\alpha t$.
	In terms of random matrices, we have
	\begin{equation}
		\label{eq:big_shift_action_in_term_of_RM}
		H^\alpha\xdashrightarrow{\mathbb{S}^\alpha}
		H^\alpha-\alpha t\,\mathbf{I},
	\end{equation}
	where $\mathbf{I}$ is the infinite identity matrix.
\end{theorem}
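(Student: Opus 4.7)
The plan is to iterate the matrix-level statement \eqref{eq:tau_k_action_in_term_of_RM} (which follows from \Cref{thm:action_of_level_swap_on_Gibbs}) and then to pass to the limit $K\to\infty$. Writing $\mathbf{a}=(0,-\alpha,-2\alpha,\ldots)$ and letting $\mathbf{a}^{(K)}=\tau_K\cdots\tau_1\mathbf{a}$ denote the parameter sequence after $K$ swaps, a short check shows
\[
	\mathbf{a}^{(K)}=(-\alpha,-2\alpha,\ldots,-K\alpha,\,0,\,-(K+1)\alpha,-(K+2)\alpha,\ldots),
\]
and the relevant parameter difference at step $k$ is $a^{(k-1)}_k-a^{(k-1)}_{k+1}=0-(-k\alpha)=k\alpha>0$, so $\mathscr{S}^{k\alpha}_k$ is indeed the correct operator (as in the definition of $\mathbb{S}^\alpha$).

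First I would prove by induction on $K$ that after $K$ swaps the corners distribution of $H^\alpha$ coincides with the corners distribution of $t^{1/2}G+t\,\mathrm{diag}(\mathbf{a}^{(K)})$. The base case $K=0$ is trivial. For the inductive step, the inductive hypothesis identifies the intermediate distribution with that of the corners of $t^{1/2}G+t\,\mathrm{diag}(\mathbf{a}^{(K-1)})$; applying \eqref{eq:tau_k_action_in_term_of_RM} at position $k=K$ turns this into the corners distribution of $t^{1/2}\,\mathscr{T}_K G\,\mathscr{T}_K+t\,\mathrm{diag}(\tau_K\mathbf{a}^{(K-1)})$, which, by unitary invariance $\mathscr{T}_K G\mathscr{T}_K\stackrel{d}{=}G$ and $\tau_K\mathbf{a}^{(K-1)}=\mathbf{a}^{(K)}$, coincides with the corners distribution of $t^{1/2}G+t\,\mathrm{diag}(\mathbf{a}^{(K)})$.

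To take the limit $K\to\infty$, the joint density \eqref{eq:pert_GUE_joint} at level $N=M$ depends only on $(a_1,\ldots,a_M)$, so the marginal of the perturbed GUE corners process on the first $M$ levels depends only on the first $M$ parameters. For $K\geq M$ we have $\mathbf{a}^{(K)}|_{1,\ldots,M}=(-\alpha,-2\alpha,\ldots,-M\alpha)$, which coincides with the first $M$ entries of $\mathbf{a}'=(-\alpha,-2\alpha,-3\alpha,\ldots)$; hence the marginal on the first $M$ levels of $\mathbb{S}^\alpha\mathbf{M}^\alpha$ agrees with that of the corners of $t^{1/2}G+t\,\mathrm{diag}(\mathbf{a}')=H^\alpha-\alpha t\,\mathbf{I}$. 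Since $M$ is arbitrary, Kolmogorov extension gives equality of the infinite-dimensional distributions, which is \eqref{eq:big_shift_action_in_term_of_RM}; subtracting the scalar matrix $\alpha t\,\mathbf{I}$ shifts every eigenvalue of every principal corner by $-\alpha t$, producing the deterministic global-shift formulation of the theorem.

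The main structural subtlety --- that a single swap preserves the family of perturbed GUE corners processes while transposing two adjacent parameters --- has already been handled in \Cref{sec:swap_operators_thm44} and encoded in \eqref{eq:tau_k_action_in_term_of_RM}; granted this, the infinite iteration is soft, because $\mathscr{S}^{k\alpha}_k$ touches only level $k$ and therefore every finite-level marginal stabilizes after finitely many swaps, so no delicate convergence analysis is needed.
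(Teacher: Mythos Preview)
Your proof is correct and takes a cleaner, more conceptual route than the paper's. The paper itself opens its proof by noting that ``informally, one can think that \eqref{eq:big_shift_action_in_term_of_RM} follows by a sequential application of the change of basis \eqref{eq:tau_k_action_in_term_of_RM},'' which is precisely your argument, but then opts instead to track the harmonic functions $\varphi_k$ under $\mathbb{S}^\alpha$ via \Cref{thm:action_of_level_swap_on_Gibbs}: it computes each $\varphi_k'$ through the integral identity \eqref{eq:pertGUE_Gibbs_property}, and then manipulates the resulting level-$k$ density explicitly (completing the square inside the determinant in \eqref{eq:lambda_N_density}) to exhibit the shift by $\alpha t$. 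Your route stays at the random-matrix level: you iterate \eqref{eq:tau_k_action_in_term_of_RM} by induction on $K$, and close the argument with the soft observation that $\mathscr{S}^{k\alpha}_k$ touches only level $k$, so the first $M$ levels stabilize after $M$ swaps and their distribution depends only on $(a_1,\ldots,a_M)$ by \eqref{eq:pert_GUE_joint}. This bypasses all the explicit density computations and the harmonic-function bookkeeping; what the paper's approach buys in exchange is that it separately identifies the limiting Gibbs structure (parameter sequence $(-\alpha,-2\alpha,\ldots)$) before pinning down the particular Gibbs measure, making the role of \Cref{thm:action_of_level_swap_on_Gibbs} and \Cref{prop:uniquely} more visible.
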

\begin{proof}
	Informally, one can think that
	\eqref{eq:big_shift_action_in_term_of_RM}
	follows by a sequential application
	of the change of basis 
	\eqref{eq:tau_k_action_in_term_of_RM}
	under a single-level action $\mathscr{S}^{k\alpha}_k$.
	The shift by $\alpha t$ is precisely the difference
	between $t\cdot\mathrm{diag}(\mathbf{a})$ before and 
	after the modification of $\mathbf{a}$.
	We will now prove
	this claim more formally, using \Cref{thm:action_of_level_swap_on_Gibbs}
	on how Gibbs measures change under swap operators.
	
	Take the 
	harmonic functions
	$\varphi_N=\varphi_N^{\mathrm{pertGUE}(\mathbf{a};t)}$
	as in \eqref{eq:pertGUE} with $\mathbf{a}=(0,-\alpha,-2\alpha,\ldots )$.
	The action of each $\mathscr{S}^{k\alpha}_k$
	changes only the $k$-th function $\varphi_k$ as in 
	\eqref{eq:modification_under_harmonic_functions}
	and leaves all other functions intact.
	Therefore, the action of the whole $\mathbb{S}^\alpha$
	replaces $\{\varphi_k\}$ by the family
	\begin{equation}
		\label{eq:global_shift_proof}
		\varphi'_k(\lambda^k)=
		\int_{\lambda^{k+1}\colon \lambda^{k+1}\succ \lambda^k}
		\varphi_{k+1}(\lambda^{k+1})\,
		d\lambda^{k+1}.
	\end{equation}
	Here we took $a_k=0$ because this is precisely the perturbation
	parameter that is being swapped with $a_{k+1}=-k\alpha$
	under the action of $\mathscr{S}^{k\alpha}_k$.
	The integral in the right-hand side of \eqref{eq:global_shift_proof}
	can be computed using 
	\eqref{eq:pertGUE_Gibbs_property} (with $a_N=0$ in that formula), 
	and we obtain
	\begin{equation*}
		\varphi_k'(\lambda^k)
		=
		\mathrm{const}\times
		\mathsf{V}(\lambda^k_1,\ldots,\lambda^k_k )
		\prod_{i=1}^{k}e^{-(\lambda^k_i)^2/(2t)}
		\prod_{j=1}^{k+1}e^{-t( (j-1)\alpha)^2/2}
		=
		C_0 \varphi_k(\lambda^k)\,e^{-tk^2\alpha^2/2}.
	\end{equation*}
	Here both $\mathrm{const}$ and $C_0$
	are some constants which are independent of $\alpha$.
	Sequentially applying \Cref{thm:action_of_level_swap_on_Gibbs},
	we see that the new
	the harmonic functions 
	$\{\varphi_k'\}$ satisfy Gibbs property
	with the sequence $\mathbf{a}=(-\alpha,-2\alpha,-3\alpha,\ldots )$,
	and hence (by \Cref{prop:uniquely}) correspond to a 
	Gibbs measure with shifted parameters. Let us now identify this 
	particular Gibbs measure.

	The modified density
	of $\lambda^k$ after the application of
	$\mathbb{S}^\alpha$ reads, by \eqref{eq:a_harmonic_defn},
	\begin{align*}
		\mathsf{Density}'(\lambda^k)
		&=
		\frac{\det[ \exp\{(-i \alpha)\lambda_j^k\} ]_{i,j=1}^k}
		{\mathsf{V}(-\alpha,-2\alpha,\ldots,-k\alpha )}
		\,\varphi_k'(\lambda^k)
		\\&=
		C_0\,
		\frac{\det[ \exp\{-i \alpha\lambda_j^k\} ]_{i,j=1}^k}
		{\mathsf{V}(-\alpha,-2\alpha,\ldots,-k\alpha )}
		\,
		\varphi_k(\lambda^k)\,e^{-tk^2\alpha^2/2}
		\\&=
		C_0\,
		\frac{\det[ \exp\{-i \alpha\lambda_j^k\} ]_{i,j=1}^k}
		{\mathsf{V}(-\alpha,-2\alpha,\ldots,-k\alpha )}
		\frac
		{\mathsf{V}(0,-\alpha,-2\alpha,\ldots,-(k-1)\alpha )}
		{\det[ \exp\{-(i-1) \alpha\lambda_j^k\} ]_{i,j=1}^k}
		\,
		\mathsf{Density}(\lambda^k)\,
		e^{-tk^2\alpha^2/2}
		\\&=
		C_0\,e^{-\alpha|\lambda^k|-tk^2\alpha^2/2}
		\mathsf{Density}(\lambda^k),
	\end{align*}
	where
	$\mathsf{Density}(\cdot)$
	is the original density before applying $\mathbb{S}^{\alpha}$.
	In the last step, the two Vandermondes are equal by their translation invariance, 
	and the ratio of the determinants is $e^{-\alpha|\lambda^k|}$
	(indeed, factor out $e^{-\lambda^k_j}$ from each $j$-th column of the determinant in the 
	numerator).
	Now, using \eqref{eq:lambda_N_density} we have
	\begin{equation*}
		\begin{split}
			&
			C_0\,e^{-\alpha|\lambda^k|-tk^2\alpha^2/2}
			\,\mathsf{Density}(\lambda^k)
			\\&\hspace{10pt}=
			C_0\mathrm{const}\times
			e^{-\alpha|\lambda^k|-tk^2\alpha^2/2}
			\det\left[ \exp\left\{ 
					-\frac{(\lambda_i^{k}+t (j-1)\alpha)^2}{2t}
			\right\} \right]_{i,j=1}^{k}
			\frac{\mathsf{V}(\lambda_1^{k},\ldots,\lambda^{k}_k )}{\mathsf{V}(0,-\alpha,
			\ldots,-(k-1)\alpha )}.
		\end{split}
	\end{equation*}
	Here $\mathrm{const}$
	is the normalizing constant in \eqref{eq:lambda_N_density}
	which is independent of $\alpha$.
	Observe that in the exponents inside the determinant we have
	\begin{equation*}
		-\frac1{2t}(\lambda^k_i+t(j-1)\alpha)^2=
		-\frac1{2t}(\lambda^k_i+\alpha t+t(j-1)\alpha)^2
		+\alpha \lambda^k_i+\frac{t \alpha^2}{2}(2j-1).
	\end{equation*}
	Factoring out the last two
	terms from each $j$-th column, we get a factor 
	which precisely cancels with 
	$e^{-\alpha|\lambda^k|-tk^2\alpha^2/2}$.
	Therefore, we see that 
	\begin{equation*}
		\mathsf{Density}'(\lambda^k)=C_0
		\mathsf{Density}(\lambda^k+\alpha t).
	\end{equation*}
	Normalizing, this implies that $C_0=1$. 
	Thus, we see that applying $\mathbb{S}^{\alpha}$ 
	is indeed equivalent to the global shift by $\alpha t$ to the left,
	as desired.
\end{proof}

We can now establish the 
shifting property for the reflected Brownian motions:

\begin{proof}[Proof of \Cref{thm:main_theorem_Brownian}]
	Fix $t$, and use the identification 
	$\{\lambda^k_j \}\stackrel{d}{=}\{B^k_j(t)\}$
	of the
	GUE corners distribution with that of the reflected Brownian motions
	from \Cref{prop:connection_to_reflected_BM}
	Denote $X_k(t):=B^k_k(t)$, then these are exactly the reflected Brownian motions
	from \Cref{thm:main_theorem_Brownian}. 
	Observe that the action of the operator $\mathscr{S}^{k\alpha}_k$
	\eqref{eq:swap_operator_at_a_single_level} on these $\lambda^k_k\stackrel{d}{=}X_k$
	depends only on $\lambda^k_k$ and $\lambda^{k+1}_{k+1}$
	and is the same as the Markov operator \eqref{eq:BM_jumps} 
	in \Cref{thm:main_theorem_Brownian}.
	Combining this observation with 
	the shifting property from \Cref{thm:shift_for_alpha_GUE}
	we obtain the desired claim.
\end{proof}

\bibliographystyle{alpha}
\bibliography{bib}

\end{document}